\nonstopmode \numberwithin{equation}{section}
\newtheorem{theorem}{Theorem}[section]
\newtheorem{example}{Example}[section]
\newtheorem{corollary}{Corollary}[section]
\newtheorem{remark}{Remark}[section]
\newtheorem{note}{Note}
\begin{document}
\bibliographystyle{amsplain}

\title{{{
Integral transforms of functions to be in the Pascu class using duality techniques
}}}

\author{
Satwanti Devi
}
\address{
Department of  Mathematics  \\
Indian Institute of Technology, Roorkee-247 667,
Uttarkhand,  India
}
\email{ssatwanti@gmail.com}

\author{
A. Swaminathan
}
\address{
Department of  Mathematics  \\
Indian Institute of Technology, Roorkee-247 667,
Uttarkhand,  India
}
\email{swamifma@iitr.ernet.in, mathswami@gmail.com}

\bigskip

\begin{abstract}
Let $W_{\beta}(\alpha,\gamma)$, $\beta<1$, denote the class of all normalized analytic functions $f$ in the unit disc
${\mathbb{D}}=\{z\in {\mathbb{C}}: |z|<1\}$ such that
\begin{align*}
{\rm Re\,} \left( e^{i\phi}\left((1-\alpha+2\gamma)\frac{f}{z}+(\alpha-2\gamma)f'+\gamma zf''-\beta\right)\frac{}{}\right)>0, \quad z\in {\mathbb{D}},
\end{align*}
for some $\phi\in {\mathbb{R}}$ with $\alpha\geq 0$, $\gamma\geq 0$ and $\beta< 1$.
Let $M(\xi)$, $0\leq \xi\leq 1$, denote the Pascu class  of $\xi$-convex functions given by the analytic condition
\begin{align*}
{\rm Re\,}\frac{\xi z(zf'(z))'+(1-\xi)zf'(z)}{\xi zf'(z)+(1-\xi)f(z)}>0
\end{align*}
which unifies the class of starlike and convex functions.
The aim of this paper is to find conditions on $\lambda(t)$ so that the integral transforms of the form
\begin{align*}
V_{\lambda}(f)(z)= \int_0^1 \lambda(t) \frac{f(tz)}{t} dt.
\end{align*}
carry functions from $W_{\beta}(\alpha,\gamma)$ into
$M(\xi)$. As applications, for specific values of $\lambda(t)$, it is found that several known integral operators
carry functions from $W_{\beta}(\alpha,\gamma)$ into $M(\xi)$.
Results for a more generalized operator related to $V_\lambda(f)(z)$ are also given.
\end{abstract}

\subjclass[2000]{30C45, 30C55, 30C80}

\keywords{Pascu-class of $\alpha$ convex functions, starlike functions, convex functions, integral transforms, Hypergeometric functions }

\maketitle

\pagestyle{myheadings}
\markboth{
Satwanti Devi and A. Swaminathan
}{
Integral transforms of functions
}

\section{Introduction}\label{sec-W-pascu-intro}

        Let $\mathcal{A}$ denote the class of all functions $f$
analytic in the open unit disc ${\mathbb{D}}=\{z\in{\mathbb{C}}: |z|<1\}$ with the normalization $f(0)=f'(0)-1=0 $
and $\mathcal{S}$ be the class of functions $f \in \mathcal{A}$ that are univalent in ${\mathbb{D}}$.
A function $f \in \mathcal{S}$ is said to be starlike $(S^\ast)$ or convex $(C)$, if $f$ maps ${\mathbb{D}}$ conformally onto the domains, respectively,
starlike with respect to origin and convex.
Note that in ${\mathbb{D}}$, if $f\in C \Longleftrightarrow zf^\prime\in S^\ast$ follows from the
well-known Alexander theorem (see \cite{DU} for details).
An useful generalization of the class $S^\ast$ is the class $S^\ast(\sigma)$ that has the analytic
characterisation $S^\ast(\sigma)=\left\{f\in A: {\rm Re \,} \dfrac{zf^\prime}{f}>\sigma; \,0\leq\sigma<1\right\}$ and $S^\ast(0)\equiv S^\ast$.
Various generalization of classes $S^\ast$ and $C$ are abundant in the literature.
One such generalization is the following:

A function $f\in\mathcal{A}$ is said to be in the Pascu class of
$\alpha$ -convex functions of order $\sigma$ if \cite{Pascu}
\begin{align*}
{\rm Re \,}\frac{\alpha z(zf'(z))'+(1-\alpha)zf'(z)}{\alpha zf'(z)+(1-\alpha)f(z)}>\sigma,\quad\quad 0\leq\alpha\leq 1,\quad 0\leq\alpha\leq 1,
\end{align*}
or in other words
\begin{align*}
\alpha zf'(z) +(1-\alpha)f(z) \in\mathcal{S^{\ast}}(\sigma).
\end{align*}
This class is denoted by $M(\alpha, \sigma)$. Even though, this class is known as Pascu class of
$\alpha$ -convex functions of order $\sigma$, since we use the parameter $\alpha$ for another important class, we
denote this class by $M(\xi, \sigma)$, $0\leq\xi\leq 1$, and we remark that, in the sequel, we only consider the class
$M(\xi):=M(\xi,0)$. Clearly $M(0)=S^{\ast}$ and $M(1)=C$ which implies that this class
$M(\xi)$ is a smooth passage between the class of starlike and convex functions.


    The main objective of this work is to find conditions on the non-negative real valued integrable function $\lambda(t)$ satisfying
$\int_0^1\lambda(t) dt=1$, such that the operator
\begin{align}\label{eq-lambda-operator}
F(z)= V_{\lambda}(f)(z):=\int_0^1 \lambda(t) \dfrac{f(tz)}{t}dt
\end{align}
is in the class $M(\xi)$. Note that this operator was introduced in \cite{Four-rus-extremal}.
To investigate this admissibility property the class to which the function $f$ belongs is important.
Let $W_{\beta}(\alpha,\gamma)$, $\alpha\geq 0$, $\gamma\geq 0$ and $\beta<1$,
denote the class of all normalized analytic functions $f$ in the unit disc ${\mathbb{D}}$ such
that
\begin{align*}
{\rm Re\,} \left( e^{i\phi}\left((1-\alpha+2\gamma)\frac{f}{z}+(\alpha-2\gamma)f'+\gamma zf''-\beta\right)\frac{}{}\right)>0, \quad z\in {\mathbb{D}}
\end{align*}
for some $\phi\in {\mathbb{R}}$. This class and its particular cases were considered by many authors so that the corresponding
operator given by $\eqref{eq-lambda-operator}$ is univalent and in $M(\xi)$ for some particular values of $\alpha$, $\beta$, $\gamma$ and $\xi$.
This work was motivated in \cite{Four-rus-extremal} by studying the conditions under which $V_{\lambda}(W_{\beta}(1,0))\subset M(0)$
and generalized in \cite{Kim-Ron} by studying the case $V_{\lambda}(W_{\beta}(\alpha, 0))\subset M(0)$.
Similar situation for the convex case, namely $V_{\lambda}(W_{\beta}(1,0)) \subset M(1)$ was initiated in \cite{Ali}.
After several generalizations by many authors, recently, the conditions under which
$V_{\lambda}(W_{\beta}(\alpha,\gamma)) \subset M(0)$ was obtained in \cite{Abeer S*} and the corresponding results for the convex case
so that $V_{\lambda}(W_{\beta}(\alpha,\gamma)) \subset M(1)$ was obtained in \cite{Mahnaz C}.
Applications involving several well known integral transforms were studied in \cite{Abeer S*} and \cite{Mahnaz C} (see also \cite{Sarika})
For all the literature involving the complete study in this direction so far we refer to
\cite{Abeer S*,Mahnaz C,rag M,Sarika} and references therein.

In this work, we find conditions on $\lambda(t)$ so that $V_{\lambda}(W_{\beta}(\alpha,\gamma)) \subset M(\xi)$ using duality techniques
which are presented in Section $\ref{sec-W-pascu-main-results}$. As applications, in Section $\ref{sec-W-pascu-application}$,
we consider particular values for $\lambda(t)$ in \eqref{eq-lambda-operator} so that results for some of the well-known integral operators can
be deduced. A more generalized operator introduced in \cite{Ali} is considered in Section $\ref{sec-W-pascu-genl-operator}$ for similar type of results.

First we underline some preliminaries that are useful for our
discussion. We introduce two constants $\mu\geq0$ and
$\nu\geq0$ satisfying \cite{Abeer S*,Ali-AAA-3rd}
\begin{equation}\label{eq-mu+nu}
\mu+\nu=\alpha-\gamma \quad \text{and} \quad\mu\nu=\gamma.
\end{equation}

When $\gamma=0$, then $\mu$ is chosen to be $0$, in which case,
$\nu=\alpha \geq 0$. When $\alpha=1+2\gamma$, \eqref{eq-mu+nu} yields
$\mu+\nu=1+\gamma=1+\mu\nu,$ or $(\mu-1)(1-\nu)=0$, and leads to two cases

\begin{itemize}
\item[{\rm{(i)}}] For $\gamma>0,$ then choosing $\mu=1$ gives $\nu=\gamma$.
\item[{\rm{(ii)}}] For $\gamma=0$, then $\mu=0$ and $\nu=\alpha=1$.
\end{itemize}
\begin{note}
Since the case $\gamma=0$ is considered in \cite{rag M}, we only consider results for
the case $\gamma>0$, except for Theorem \ref{app-W-class-gamma0} (see Remark \ref{remark-W-gamma0}).
\end{note}

Next we introduce two known auxiliary functions \cite{Abeer S*}. Let
\begin{equation}\label{phi-series}
\phi_{\mu,\nu}(z)=1+\sum_{n=1}^{\infty }\dfrac{(n\nu +1)(n\mu+1)}{n+1}z^{n},
\end{equation}
and
\begin{eqnarray}\label{psi-series}
\psi_{\mu,\nu}(z)=\phi_{\mu,\nu}^{-1}(z)&=&1+\sum\limits_{n=1}^{\infty }\frac{n+1}{(n\nu+1)(n\mu +1)}z^{n}
=\int_0^1\int_0^1\frac{dsdt}{(1-t^\nu s^\mu z)^2}.
\end{eqnarray}

Here $\phi_{\mu,\nu}^{-1}$ denotes the convolution inverse of $\phi_{\mu,\nu}$
such that $\phi_{\mu,\nu}\ast \phi_{\mu,\nu}^{-1}=1/(1-z)$. By $\ast$, we mean the following:
If $f$ and  $g$ are in ${\mathcal{A}}$ with the power series
expansions $ f(z)=\sum_{k=0}^{\infty} a_{k}z^k$ and
$g(z)= \sum_{k=0}^{\infty} b_{k}z^k$ respectively, then the convolution or Hadamard product
of $f$  and  $g$ is given by $ h(z)= \sum_{k=0}^{\infty}a_{k}b_{k}z^k.$

Since $\nu \geq 0$, $\mu \geq 0$, when $\gamma \geq 0$, making the change of
variables $u =t^{\nu }$, $v =s^{\mu }$ in \eqref{psi-series} result in
writing  $\psi_{\mu,\nu}$ as
\begin{align*}
\psi_{\mu,\nu}(z)=\displaystyle\left\{
\begin{array}{cll}&\displaystyle\dfrac{1}{\mu\nu}\int_0^1\int_0^1\dfrac{u^{1/\nu-1}v^{1/\mu-1}}{(1-uvz)^2}dudv,
\quad \gamma>0,\\ \\
&\displaystyle\int_0^1\dfrac{dt}{(1-t^\alpha z)^2}, \quad \quad \quad \gamma=0,
\alpha\geq0.
\end{array}\right.
\end{align*}

Now let $g$ be the solution of the initial value-problem
\begin{align}\label{de-g}
\frac{d}{dt}t^{1/\nu}(1+g(t))=\left\{
\begin{array}{cll}&\displaystyle
\dfrac{2}{\mu\nu}t^{1/{\nu-1}}\int_0^1 \frac{s^{1/\mu-1}}{(1+st)^2}ds, \quad
\gamma>0,\\\\
&\displaystyle\dfrac{2}{\alpha}\dfrac{t^{1/\alpha-1}}{(1+t)^2}, \quad \quad \quad
\gamma=0, \alpha>0,
\end{array}\right.
\end{align}
satisfying $g(0)=1$. The series solution is given by
\begin{align}\label{series-g(t)}
g(t)=2\sum_{n=0}^{\infty
}\frac{(n+1)(-1)^{n}t^{n}}{(1+\mu n)(1+\nu n)}-1.
\end{align}
\newline
Let $q$ be the solution of the differential equation
\begin{align}\label{de-q}
\dfrac{d}{dt}t^{1/\nu}q(t)=\left\{
\begin{array}{cll}&\displaystyle\dfrac{1}{\mu\nu}
t^{1/{\nu-1}}\int_0^1 {s^{1/\mu-1}}\dfrac{(1-st)}{(1+st)^3}ds, \quad
\gamma>0\\\\
&\displaystyle\dfrac{1}{\alpha}{t^{1/\alpha-1}}\dfrac{(1-t)}{(1+t)^3}, \quad \quad \quad
\gamma=0, \alpha>0.
 \end{array}\right.
\end{align}
satisfying $q(0)=0$. The series solution of $q(t)$ is given by
\begin{align}\label{series-q(t)}
q(t)=\sum_{n=0}^\infty\dfrac{(n+1)^2(-1)^n t^n}{(1+\mu n)(1+\nu n)}
\end{align}
Note that $q(t)$ also satisfies $2q(t)=tg^\prime(t)+g(t)+1$.

Our main results is the generalization of the following results given in
\cite{Abeer S*} and \cite{Mahnaz C}. The necessary and sufficient
conditions under which the operator $V_\lambda(f(z))$ carries the
function $f(z)$ from ${W}_\beta(\alpha, \gamma)$, to the classes
$S^\ast$ and $C$, respectively are given in next two results.
\begin{theorem}\label{abeer-W-class-S*}\cite{Abeer S*}
Consider $\mu\geq 0$, $\nu\geq 0$ given by \eqref{eq-mu+nu} with $\beta < 1$ satisfying
\begin{align}\label{beta-abeer-W-class-S*}
\frac{\beta}{1-\beta}=-\int_0^1\lambda(t)g(t)dt,
\end{align}
where $g$ is the solution of the initial value-problem \eqref{de-g}
and let $f\in \mathcal{W}_\beta(\alpha, \gamma)$.
Assume that $t^{1/\nu}\Lambda_{\nu}(t)\rightarrow 0$, and
$t^{1/\mu}\Pi_{\mu, \nu}(t)\rightarrow 0$ as $t\rightarrow 0^+.$
Then $\ F(z)=V_\lambda(f)(z)$ is in $\mathcal{S^*}$ if and only if
\begin{align}\label{abeer-W-class-S*-result}
\left\{
\begin{array}{cll}&\displaystyle
{\rm Re\,}{\int_0^1\Pi_{\mu,\nu}(t)t^{1/\mu-1}\left(
\frac{h(tz)}{tz}-\frac{1}{(1+t)^2}\right)dt}\geq 0, & \gamma >0,\\
&\displaystyle{\rm Re\,}{\int_0^1\Pi_{0,\alpha}(t)t^{1/\alpha-1}\left(
\frac{h(tz)}{tz}-\frac{1}{(1+t)^2}\right)dt}\geq 0, & \gamma=0,
 \end{array}\right.
\end{align}
where
\begin{align}\label{eqn-lambda-nu}
\Lambda_\nu(t)=\int_t^1\frac{\lambda(x)}{x^{1/\nu}}dx,\ \ \nu>0,
\end{align}

\begin{align}\label{eqn-Pi-nu-mu}
\Pi_{\mu, \,\nu}(t)=\left\{
\begin{array}{cll}&\displaystyle
\int_t^1\Lambda_\nu(x)x^{1/\nu-1-1/\mu}dx,
\quad \gamma>0 (\mu>0,\nu>0),\\
&\displaystyle\Lambda_\alpha(t) , \quad \gamma=0 (\mu=0,
\nu=\alpha>0)
 \end{array}\right.
\end{align}
and
\begin{align*}
h(z)=\frac{z(1+\frac{\epsilon-1}{2}z)}{(1-z)^2}, \quad |\epsilon|=1.
\end{align*}
\end{theorem}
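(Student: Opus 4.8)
The plan is to run the Fournier--Ruscheweyh duality machinery. First I would record the convolution description of the class. Writing $f(z)=z+\sum_{n\ge 2}a_nz^n$, a direct coefficient computation shows that the bracketed expression defining $W_\beta(\alpha,\gamma)$ equals $1+\sum_{n\ge2}(1+\mu(n-1))(1+\nu(n-1))a_nz^{n-1}$, with $\mu,\nu$ from \eqref{eq-mu+nu}. Comparing the factor $(1+\mu k)(1+\nu k)$ with the coefficients of $\phi_{\mu,\nu}$ in \eqref{phi-series} (the difference being exactly the moment $1/(k+1)=\int_0^1 t^k\,dt$ that the integral operator $V_\lambda$ will later supply), membership in $W_\beta(\alpha,\gamma)$ becomes a positive-real-part condition which, after inverting via $\psi_{\mu,\nu}=\phi_{\mu,\nu}^{-1}$ in \eqref{psi-series}, represents $f/z$ as $\psi_{\mu,\nu}$ convolved with a (rotated) function of positive real part. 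The extreme points of the latter are $p_x(z)=(1+xz)/(1-z)$, $|x|=1$, and these feed the duality argument.

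Next I would translate starlikeness into a convolution condition. By Ruscheweyh's duality characterization of $\mathcal S^\ast$, one has $F=V_\lambda(f)\in\mathcal S^\ast$ precisely when $\tfrac1z\big(F\ast h\big)(z)\ne0$ on $\mathbb D$ for every $|\epsilon|=1$, where $h$ is exactly the kernel displayed in the statement. Since $V_\lambda$ and $h$ both act by convolution, $F\ast h$ rewrites as a convolution against $f$, and substituting the integral form \eqref{eq-lambda-operator} together with the representation of the previous step turns the non-vanishing requirement into the demand that a certain $z$-dependent integral against $\lambda$ have positive real part. The Fournier--Ruscheweyh minimum principle then lets me replace the quantifier ``for all $f\in W_\beta(\alpha,\gamma)$'' by the single family of extreme points $p_x$, $|x|=1$; evaluating the extremal contribution at the worst radial direction $z\to-1$ produces the reference value $1/(1+t)^2$ that is subtracted inside \eqref{abeer-W-class-S*-result}.

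The analytic heart is then an integration by parts that converts the measure $\lambda(t)\,dt$ into the iterated kernels \eqref{eqn-lambda-nu}--\eqref{eqn-Pi-nu-mu}. Using the double-integral form of $\psi_{\mu,\nu}$ recorded after \eqref{psi-series}, I would integrate first in the variable carrying the exponent $1/\nu$ (introducing $\Lambda_\nu$) and then in the variable carrying $1/\mu$ (introducing $\Pi_{\mu,\nu}$); the hypotheses $t^{1/\nu}\Lambda_\nu(t)\to0$ and $t^{1/\mu}\Pi_{\mu,\nu}(t)\to0$ as $t\to0^+$ are exactly what is needed to annihilate the boundary terms at $t=0$. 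Splitting according to $\gamma>0$ ($\mu,\nu>0$) and $\gamma=0$ ($\mu=0,\nu=\alpha$) gives the two branches of \eqref{eqn-Pi-nu-mu} and of the final condition \eqref{abeer-W-class-S*-result}.

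Finally I would pin down the calibration of $\beta$. The functions $g$ and $q$ of \eqref{de-g} and \eqref{de-q}, together with the identity $2q(t)=tg'(t)+g(t)+1$, encode the value of the extremal convolution at the critical point; demanding that the leading constant in the reduced expression cancel forces the relation \eqref{beta-abeer-W-class-S*}, i.e. $\beta/(1-\beta)=-\int_0^1\lambda(t)g(t)\,dt$, which is precisely the threshold making the stated condition both necessary and sufficient. The main obstacle I anticipate is the duality reduction itself: justifying that the minimum of the relevant real part over the class is attained at the extreme points $p_x$ and at the radial limit $z\to-1$, while simultaneously checking that the integration-by-parts boundary terms vanish under the stated decay hypotheses---these are where the normalization $\int_0^1\lambda(t)\,dt=1$, the sign of $\lambda$, and the decay conditions all have to be used together.
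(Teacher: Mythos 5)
The paper does not actually prove this statement---it is quoted from \cite{Abeer S*}---but your outline is precisely the Fournier--Ruscheweyh duality argument that the source follows and that this paper reuses for its generalization in Theorem~\ref{thm-W-Pascu-dual-equiv}: the convolution identity $H=f'\ast\phi_{\mu,\nu}$ inverted by $\psi_{\mu,\nu}$, the non-vanishing test $\tfrac{1}{z}(F\ast h)\neq 0$ reduced by duality to the extreme points and to a real-part inequality calibrated at $z\to -1$ (whence the subtracted $1/(1+t)^2$), and the iterated integration by parts that produces $\Lambda_\nu$ and $\Pi_{\mu,\nu}$ with the decay hypotheses annihilating the boundary terms, so your proposal is correct and takes essentially the same route. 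The only cosmetic slip is invoking $q$ and the identity $2q(t)=tg'(t)+g(t)+1$ in the calibration of $\beta$: for the starlike case only $g$ enters, via \eqref{beta-abeer-W-class-S*}.
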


\begin{theorem}\label{Mahnaz-W-class-C}\cite{Mahnaz C}
Let $f\in \mathcal{W}_\beta(\alpha, \gamma)$,  $\mu\geq 0$, $\nu\geq
0$ satisfy $(\ref{eq-mu+nu})$ and $\beta< 1,$ be given by
\begin{align}\label{beta-Mahnaz-W-class-C}
\frac{\beta-1/2}{1-\beta}=-\int_0^1\lambda(t)q(t)dt,
\end{align}
where $q$ is given by $(\ref{de-q})$.
Further $\Lambda_\nu(t)$ and $\Pi_{\nu, \,\mu}(t)$ are given in $\eqref{eqn-lambda-nu}$ and $(\ref{eqn-Pi-nu-mu})$
and assume that $t^{1/\mu}\Lambda_\nu(t)\rightarrow 0$, and
$t^{1/\nu}\Pi_{\mu, \,\nu}(t)\rightarrow 0$ as $t\rightarrow 0^+.$ Then
\begin{align}\label{Mahnaz-W-class-C-result}
\left\{
\begin{array}{cll}&\displaystyle
{\rm Re\,}
{\int_0^1\Pi_{\nu, \,\mu}(t)t^{1/\mu-1}\left(h'(tz)-\frac{1-t}{(1+t)^3}\right)dt}\geq
0, \quad \gamma >0,
\\
&\displaystyle{\rm Re\,}
{\int_0^1\Pi_{0, \,\alpha}(t)t^{1/\alpha-1}\left(h'(tz)-\frac{1-t}{(1+t)^3}\right)dt}\geq
0, \quad \gamma=0,
\end{array}\right.
\end{align}
if and only if $\ F(z)=V_\lambda(f)(z)$ is in $C$.
\end{theorem}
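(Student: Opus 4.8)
The plan is to run the same duality scheme that underlies Theorem~\ref{abeer-W-class-S*}, but with starlikeness of $F=V_\lambda(f)$ replaced by convexity. I would begin from the Alexander-type reformulation $F\in C\iff zF'\in S^\ast$, together with the elementary observation that differentiating under the integral sign in \eqref{eq-lambda-operator} gives $zF'(z)=V_\lambda(zf')(z)$, so the whole question can be phrased through the single kernel $\Gamma_\lambda(z)=\int_0^1\lambda(t)\,z\,dt/(1-tz)$ of $V_\lambda$ acting on $zf'$. The decisive structural fact is the coefficient identity
\begin{align*}
(1-\alpha+2\gamma)\frac{z^{n}}{z}+(\alpha-2\gamma)(z^{n})'+\gamma z(z^{n})''=\bigl[1+\mu(n-1)\bigr]\bigl[1+\nu(n-1)\bigr]z^{n-1},
\end{align*}
which holds precisely because $\mu+\nu=\alpha-\gamma$ and $\mu\nu=\gamma$ by \eqref{eq-mu+nu}. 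This factorisation is exactly what makes $\phi_{\mu,\nu}$ in \eqref{phi-series} the symbol of the operator defining $W_\beta(\alpha,\gamma)$ and $\psi_{\mu,\nu}$ in \eqref{psi-series} its convolution inverse.

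Next I would encode membership in $W_\beta(\alpha,\gamma)$ by duality. The defining inequality says that, up to the rotation $e^{i\phi}$, the expression $(1-\alpha+2\gamma)f/z+(\alpha-2\gamma)f'+\gamma zf''$ has positive real part after subtracting $\beta$; its extreme points are obtained by replacing this expression with $\beta+(1-\beta)\tfrac{1+xz}{1-z}$, $|x|=1$. Convolving with $\psi_{\mu,\nu}$ recovers $f$, and hence $zF'=V_\lambda(zf')$, as an explicit convolution of three kernels: the kernel $\Gamma_\lambda$ of $V_\lambda$, the double-integral kernel $\psi_{\mu,\nu}$ from \eqref{psi-series}, and the extremal Carath\'eodory kernel. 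By Ruscheweyh's duality theorem applied to $zF'$, starlikeness of $zF'$ (hence convexity of $F$) is equivalent to the nonvanishing of $\tfrac1z\bigl(zF'(z)\ast k_\epsilon(z)\bigr)$ for all $|\epsilon|=1$, where $k_\epsilon$ is the starlikeness kernel; writing this out with $h(z)=z(1+\tfrac{\epsilon-1}{2}z)/(1-z)^2$ converts the non-vanishing statement into the real-part inequality \eqref{Mahnaz-W-class-C-result}.

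The core of the work is the reduction of this triple convolution to the single integral in \eqref{Mahnaz-W-class-C-result}. Here I would interchange the orders of integration coming from $\psi_{\mu,\nu}$ and from $V_\lambda$ and integrate by parts twice, first against $\Lambda_\nu$ from \eqref{eqn-lambda-nu} and then against $\Pi_{\nu,\mu}$ from \eqref{eqn-Pi-nu-mu}; the hypotheses $t^{1/\mu}\Lambda_\nu(t)\to0$ and $t^{1/\nu}\Pi_{\mu,\nu}(t)\to0$ as $t\to0^+$ are exactly what annihilates the boundary terms. Because we are characterising starlikeness of $zF'$ rather than of $F$ itself, the coefficients carry an extra factor $n$; tracking this factor through the inversion by $\psi_{\mu,\nu}$ promotes the starlike kernel $h(tz)/(tz)$ of Theorem~\ref{abeer-W-class-S*} to $h'(tz)$ and the comparison term $1/(1+t)^2$ to its derivative $\tfrac{1-t}{(1+t)^3}=\tfrac{d}{dt}\tfrac{t}{(1+t)^2}$. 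The normalisation \eqref{beta-Mahnaz-W-class-C} is then matched using $q$ from \eqref{de-q}--\eqref{series-q(t)} and the identity $2q(t)=tg'(t)+g(t)+1$, which ties the convex constant $\tfrac{\beta-1/2}{1-\beta}$ to the starlike constant in \eqref{beta-abeer-W-class-S*}.

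I expect the main obstacle to be precisely this last reduction: justifying the interchange of integrations and the vanishing of the boundary terms in the repeated integration by parts, and verifying that the resulting inequality is equivalent --- in both directions --- to convexity rather than merely necessary. The ``only if'' direction is immediate from the duality formulation, but the ``if'' direction requires that the extremal functions $\tfrac{1+xz}{1-z}$ genuinely exhaust the relevant part of the dual set, so that nonvanishing for every $|\epsilon|=1$ forces $F\in C$. Keeping track of the two admissible choices of $(\mu,\nu)$ allowed by \eqref{eq-mu+nu} when $\alpha=1+2\gamma$, and handling the $\gamma>0$ and $\gamma=0$ branches of \eqref{eqn-Pi-nu-mu} uniformly, will be the remaining bookkeeping.
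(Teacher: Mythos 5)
This theorem is quoted verbatim from \cite{Mahnaz C}; the paper under review gives no proof of it, and the closest in-house argument is the proof of Theorem \ref{thm-W-Pascu-dual-equiv}, which runs exactly the duality scheme you describe and recovers this statement at $\xi=1$. Your outline matches that scheme point for point: Alexander's theorem together with $zF'=V_\lambda(zf')$, the coefficient factorisation $1+\alpha(n-1)+\gamma(n-1)(n-2)=[1+\mu(n-1)][1+\nu(n-1)]$ (which is correct and is what makes $\psi_{\mu,\nu}$ the convolution inverse of $\phi_{\mu,\nu}$), Ruscheweyh's nonvanishing criterion with the kernel $h$, and the double integration by parts that produces $\Pi_{\mu,\nu}$ and is killed at the endpoints by the stated decay hypotheses. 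The one substantive correction: since $W_\beta(\alpha,\gamma)$ is defined with the rotation $e^{i\phi}$, the extremal test family is the two-parameter family $(1+xz)/(1+yz)$ with $|x|=|y|=1$ --- whose duality yields the ``${\rm Re\,}(\cdots\ast\psi_{\mu,\nu})>\tfrac12$'' criterion the paper invokes from \cite[p.~23]{Rus} --- and not the one-parameter family $(1+xz)/(1-z)$ you write down; with the smaller family the ``if'' direction you flag as the main worry would genuinely fail, and passing to the correct two-parameter family is exactly what closes that gap.
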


It is difficult to verify conditions \eqref{abeer-W-class-S*-result} and \eqref{Mahnaz-W-class-C-result}.
Hence the following results involving sufficient conditions are useful for finding applications.
\begin{theorem}\label{abeer-W-class-dec-S*}\cite{Abeer S*}
Let
$\Lambda_\nu$ and $\Pi_{\mu, \nu}$ are defined in
$\eqref{eqn-lambda-nu}$ and $\eqref{eqn-Pi-nu-mu}$. Assume that both $\Pi_{\mu, \nu}$ and $\Lambda_\nu$ are
integrable on $\rm[0,1]$ and positive on $\rm(0,1)$. Assume further that
$\mu\geq 1$ and
\begin{equation}\label{abeer-W-class-dec-S*-result}
\frac{\Pi_{\mu, \nu}(t)}{1-t^2}
\end{equation}
is decreasing on $(0,1)$. If $\beta$ satisfies
$\eqref{beta-abeer-W-class-S*}$ and $f\in \mathcal{W}_\beta(\alpha, \gamma)$,
then $V_\lambda(f)\in \mathcal{S^*}$.
\end{theorem}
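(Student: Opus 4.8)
The plan is to derive the conclusion from the necessary-and-sufficient criterion of Theorem~\ref{abeer-W-class-S*}. Note first that the hypothesis $\mu\ge 1$ forces $\gamma>0$, since $\gamma=0$ would give $\mu=0$; thus only the first alternative of \eqref{abeer-W-class-S*-result} is relevant. Because $\beta$ satisfies \eqref{beta-abeer-W-class-S*}, $f\in\mathcal{W}_\beta(\alpha,\gamma)$, and the regularity hypotheses of Theorem~\ref{abeer-W-class-S*} (namely $t^{1/\nu}\Lambda_\nu(t)\to 0$ and $t^{1/\mu}\Pi_{\mu,\nu}(t)\to 0$ as $t\to 0^+$) are in force, all that remains is to verify the sign condition \eqref{abeer-W-class-S*-result}.

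I would set $P(t):=\Pi_{\mu,\nu}(t)\,t^{1/\mu-1}$, so that the integrand in \eqref{abeer-W-class-S*-result} becomes $P(t)\bigl(h(tz)/(tz)-(1+t)^{-2}\bigr)$. Since $h(tz)/(tz)=\bigl(1+\tfrac{\epsilon-1}{2}tz\bigr)/(1-tz)^2$ and $(1+t)^{-2}$ is exactly the value of this kernel at $z=-1$, $\epsilon=1$, the task reduces to proving
\begin{align*}
\mathrm{Re}\int_0^1 P(t)\,\frac{1+\tfrac{\epsilon-1}{2}tz}{(1-tz)^2}\,dt\;\ge\;\int_0^1\frac{P(t)}{(1+t)^2}\,dt,\qquad z\in\mathbb{D},\ |\epsilon|=1.
\end{align*}
This statement is homogeneous in $P$, so no normalization is needed; the normalization \eqref{beta-abeer-W-class-S*} enters only through Theorem~\ref{abeer-W-class-S*}.

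The decisive step is a monotonicity transfer. Writing $\dfrac{P(t)}{1-t^2}=\dfrac{\Pi_{\mu,\nu}(t)}{1-t^2}\,t^{1/\mu-1}$, the hypothesis $\mu\ge 1$ makes $t^{1/\mu-1}$ positive and non-increasing on $(0,1)$, while \eqref{abeer-W-class-dec-S*-result} makes $\Pi_{\mu,\nu}(t)/(1-t^2)$ positive and decreasing there; hence the product $P(t)/(1-t^2)$ is positive and decreasing on $(0,1)$. This is precisely the input of the Fournier--Ruscheweyh positivity lemma, which asserts that for a nonnegative integrable weight $P$ with $P(t)/(1-t^2)$ decreasing the displayed inequality holds, its left-hand infimum being attained as $z\to -1$, $\epsilon=1$. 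Applying that lemma closes the argument.

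I expect the main obstacle to be the positivity lemma itself. Its proof should proceed by passing to the boundary $z=e^{i\theta}$ via the minimum principle for the harmonic function $z\mapsto\mathrm{Re}\int_0^1 P(t)(1-tz)^{-2}\bigl(1+\tfrac{\epsilon-1}{2}tz\bigr)dt$, expanding the real part of the kernel in $t$, and then integrating by parts so that the decreasing weight $P(t)/(1-t^2)$ is tested against a kernel of constant sign, which localizes the extremum at $z=-1$, $\epsilon=1$. The delicate points are justifying the boundary passage and the interchanges of integration stemming from the nested definitions of $\Pi_{\mu,\nu}$ and $\Lambda_\nu$, together with endpoint control at $t=0$ and $t=1$; the monotonicity hypothesis is exactly what keeps the tested kernel one-signed throughout.
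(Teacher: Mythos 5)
This theorem is imported verbatim from \cite{Abeer S*} and the paper gives no proof of it, so there is nothing internal to compare against; your proposal is, however, correct and is essentially the argument used in the cited source: reduce to the duality criterion \eqref{abeer-W-class-S*-result} of Theorem~\ref{abeer-W-class-S*} (the case $\gamma>0$, since $\mu\ge 1$ excludes $\mu=0$), and apply the Fournier--Ruscheweyh positivity lemma \cite{Four-rus-extremal} to the weight $t^{1/\mu-1}\Pi_{\mu,\nu}(t)$, whose quotient by $1-t^2$ is decreasing as the product of the positive decreasing function \eqref{abeer-W-class-dec-S*-result} with the positive non-increasing factor $t^{1/\mu-1}$. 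The only loose end is your claim that the limit hypotheses $t^{1/\nu}\Lambda_\nu(t)\to 0$ and $t^{1/\mu}\Pi_{\mu,\nu}(t)\to 0$ are automatically ``in force'': they are not among the stated hypotheses of this theorem and would need to be either assumed or deduced from integrability, a point the source reference also treats as implicit.
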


\begin{theorem}\label{Mahnaz-W-class-dec-C}\cite{Mahnaz C}
Let
$\Lambda_\nu$ and $\Pi_{\mu, \nu}$ are defined in
$\eqref{eqn-lambda-nu}$ and $\eqref{eqn-Pi-nu-mu}$. Assume that both are integrable on $[0,1]$ and
positive on $(0,1)$. Assume further that $\mu\geq 1$ and
\begin{align}\label{Mahnaz-W-class-dec-C-result}
\dfrac{\Lambda_{\nu}(t)t^{1/\nu-1/\mu}+(1-1/\mu)\Pi_{\mu,\nu}(t)}{1-t^2},
\end{align}
is decreasing on $(0,1)$. If $\beta$ satisfies $(\ref{beta-Mahnaz-W-class-C})$
and $f\in\mathcal{W}_{\beta}(\alpha,\gamma)$, then $V_{\lambda}(f)\in C$.
\end{theorem}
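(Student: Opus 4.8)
The plan is to deduce the result from the necessary-and-sufficient characterisation already available in Theorem \ref{Mahnaz-W-class-C}. Since $\beta$ is assumed to satisfy the normalisation \eqref{beta-Mahnaz-W-class-C} and $f\in\mathcal{W}_\beta(\alpha,\gamma)$, that theorem reduces the membership $V_\lambda(f)\in C$ to the single analytic inequality \eqref{Mahnaz-W-class-C-result}. Thus it suffices to show that the monotonicity hypothesis on \eqref{Mahnaz-W-class-dec-C-result}, together with $\mu\ge 1$ and the positivity and integrability of $\Lambda_\nu$ and $\Pi_{\mu,\nu}$, forces
\begin{align*}
{\rm Re\,}\int_0^1\Pi_{\nu,\mu}(t)\,t^{1/\mu-1}\left(h'(tz)-\frac{1-t}{(1+t)^3}\right)dt\ge 0
\end{align*}
for every $z\in\mathbb{D}$ and every admissible $h$ with $|\epsilon|=1$.

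First I would recast the integral by an integration by parts in the $t$-variable, writing $h'(tz)=z^{-1}\frac{d}{dt}h(tz)$ so that the derivative falls on the weight $\Pi_{\nu,\mu}(t)\,t^{1/\mu-1}$. Differentiating this weight produces two contributions: the power $t^{1/\mu-1}$ yields the factor $(1-1/\mu)$, while the defining relation $\Pi_{\mu,\nu}'(t)=-\Lambda_\nu(t)\,t^{1/\nu-1-1/\mu}$ coming from \eqref{eqn-Pi-nu-mu} yields the $\Lambda_\nu$-term. Collecting them reproduces precisely the numerator $\Lambda_\nu(t)t^{1/\nu-1/\mu}+(1-1/\mu)\Pi_{\mu,\nu}(t)$ of \eqref{Mahnaz-W-class-dec-C-result}, while the boundary terms vanish by the prescribed decay $t^{1/\mu}\Lambda_\nu(t)\to 0$ and $t^{1/\nu}\Pi_{\mu,\nu}(t)\to 0$ as $t\to 0^+$. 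The hypothesis $\mu\ge 1$ makes $1-1/\mu\ge 0$, so that this combined weight is positive on $(0,1)$; this is the convex analogue of the appearance of $\Pi_{\mu,\nu}(t)/(1-t^2)$ in the starlike Theorem \ref{abeer-W-class-dec-S*}, and reflects the Alexander-type passage $V_\lambda(f)\in C\iff z\,(V_\lambda(f))'\in\mathcal{S}^\ast$.

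Next I would invoke the Fournier--Ruscheweyh positivity principle. Because the integral is a harmonic function of $z$, its infimum over $\overline{\mathbb{D}}$ is attained on $|z|=1$, so it is enough to verify the inequality for $z=e^{i\theta}$, where it becomes a real trigonometric integral against the kernel obtained from the reduced integrand. The decisive fact is that this kernel, multiplied by $(1-t^2)$, has non-negative partial integrals on $(0,1)$ (a Fej\'er/Steffensen-type positivity). Writing the reduced integral as $\int_0^1 \frac{W(t)}{1-t^2}\,(1-t^2)\,\widetilde{K}(t)\,dt$ with $W(t)=\Lambda_\nu(t)t^{1/\nu-1/\mu}+(1-1/\mu)\Pi_{\mu,\nu}(t)$, the assumption that $W(t)/(1-t^2)$ is decreasing lets one apply a Steffensen/summation-by-parts argument: a monotone density paired with a kernel of positive partial integrals yields a non-negative integral, which is exactly \eqref{Mahnaz-W-class-C-result}.

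The main obstacle I expect is twofold. The analytic heart is establishing the correct positivity lemma for the specific convex kernel $h'(tz)-\frac{1-t}{(1+t)^3}$ with $|\epsilon|=1$, i.e.\ identifying the function whose partial integrals are non-negative and pairing it with the monotone density $W(t)/(1-t^2)$; the starlike version supplies a template, but the extra differentiation changes the kernel and the passage to $|z|=1$ must be handled carefully near the singularity $tz\to 1$. The bookkeeping heart is the integration-by-parts step: one must confirm that the subscript interchange between $\Pi_{\nu,\mu}$ in \eqref{Mahnaz-W-class-C-result} and $\Pi_{\mu,\nu}$ in \eqref{Mahnaz-W-class-dec-C-result} is correctly accounted for, that the boundary terms genuinely vanish under the stated decay, and that the surviving terms collapse to exactly $\Lambda_\nu(t)t^{1/\nu-1/\mu}+(1-1/\mu)\Pi_{\mu,\nu}(t)$, which is where the hypothesis $\mu\ge 1$ is indispensable for both the sign of $1-1/\mu$ and the positivity of the resulting weight.
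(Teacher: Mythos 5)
First, a point of reference: the paper does not prove this statement at all. Theorem \ref{Mahnaz-W-class-dec-C} is quoted verbatim from \cite{Mahnaz C} as a known tool, so there is no in-paper proof to compare against. Judged on its own merits, your outline follows the route that the cited source takes, and that the paper itself reproduces (with an interpolation parameter $\xi$) in the proof of Theorem \ref{A.S-result-inc}: write $h'(tz)-\frac{1-t}{(1+t)^3}=\frac{d}{dt}\bigl[t\bigl(\frac{h(tz)}{tz}-\frac{1}{(1+t)^2}\bigr)\bigr]$, integrate by parts so the derivative falls on $\Pi_{\mu,\nu}(t)t^{1/\mu-1}$, and use $\Pi_{\mu,\nu}'(t)=-\Lambda_\nu(t)t^{1/\nu-1-1/\mu}$ from \eqref{eqn-Pi-nu-mu} to land on the weight $W(t)=\Lambda_\nu(t)t^{1/\nu-1/\mu}+(1-1/\mu)\Pi_{\mu,\nu}(t)$ paired with the starlike kernel. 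That reduction is correct, and your observation that $\mu\geq 1$ is what makes $W\geq 0$ is the right use of that hypothesis.

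Two genuine gaps remain. The decisive one is that you never prove the positivity lemma you invoke: the assertion that a positive integrable weight $W$ with $W(t)/(1-t^2)$ decreasing forces ${\rm Re\,}\int_0^1 W(t)t^{1/\mu-1}\bigl(\frac{h(tz)}{tz}-\frac{1}{(1+t)^2}\bigr)dt\geq 0$ for every $|\epsilon|=1$ is precisely the content of (a general-weight version of) Theorem \ref{abeer-W-class-dec-S*}, and it is where essentially all of the analytic work in \cite{Abeer S*} lives; naming it a ``Fej\'er/Steffensen-type positivity'' does not discharge it, and you explicitly list it as an unresolved obstacle rather than a completed step. Note also that Theorem \ref{abeer-W-class-dec-S*} as stated applies only to the specific weight $\Pi_{\mu,\nu}$, so you must either cite the general-weight lemma from \cite{Abeer S*} or verify that its proof survives the replacement of $\Pi_{\mu,\nu}$ by $W$. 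The second, smaller gap concerns the boundary terms: the decay conditions $t^{1/\mu}\Lambda_\nu(t)\to 0$ and $t^{1/\nu}\Pi_{\mu,\nu}(t)\to 0$ as $t\to 0^+$ that you use to make them vanish are hypotheses of Theorem \ref{Mahnaz-W-class-C}, not of the theorem you are proving, so they must either be added to the hypotheses or deduced from integrability and positivity. (The subscript discrepancy between $\Pi_{\nu,\mu}$ in \eqref{Mahnaz-W-class-C-result} and $\Pi_{\mu,\nu}$ in \eqref{Mahnaz-W-class-dec-C-result} is a transcription slip in the paper, and you are right to flag it.)
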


\section{Main Results}\label{sec-W-pascu-main-results}

We start with a result that gives both necessary and
sufficient condition for an integral transform that satisfies the admissibility property of the class
$W_{\beta}(\alpha,\gamma)$, which contain non-univalent functions also,
to the Pascu class $M(\xi)$.
\begin{theorem}\label{thm-W-Pascu-dual-equiv}
Let $\mu>0$ , $\nu>0$, satisfies \eqref{eq-mu+nu} and $\beta <1$ satisfies
\begin{align}\label{W-class-beta-cond}
\dfrac{\beta}{(1-\beta)}=-\int_0^1 \lambda(t)[(1-\xi)g(t)+\xi(2q(t)-1)]dt,
\end{align}
where $g(t)$ and $q(t)$ are defined by the differential equations
given in \eqref{de-g} and \eqref{de-q} respectively. Assume that $t^{1/\nu}\Lambda_\nu (t)\rightarrow 0$ and
$t^{1/\mu}\Pi_{\mu, \,\nu}(t)\rightarrow 0$ as $t\rightarrow 0^+$. Then
\begin{align*}
N_{\Pi_{\mu, \,\nu}}\geq 0\Longleftrightarrow F=V_{\lambda}(W_{\beta}(\alpha,\gamma))\in M(\xi)
\end{align*}
or
\begin{align*}
\xi zF^\prime + (1-\xi)F \in S^\ast,
\end{align*}
where
\begin{equation}\label{eqn-W-pascu-N-Pi}
N_{\Pi_{\mu,\,\nu}}(h)=\inf_{z\in\Delta} \int_{0} ^{1} t^{{1/{\mu}} -1} \Pi_{\mu, \,\nu}(t){\mathcal L}_{\xi, \,z}(t)dt
\end{equation}
and
\begin{equation*}
{\mathcal L}_{\xi,\,z}(t)=(1-\xi) \left( {\rm Re \,} \dfrac{h(tz)}{tz} - \dfrac{1}{(1+t)^2} \right) +
\xi \left( {\rm Re \,}{ \,} h^{\prime}(tz) - \dfrac{(1-t)}{(1+t)^3}\right).
\end{equation*}
The value of $\beta$ is sharp.
\end{theorem}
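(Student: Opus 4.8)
The plan is to reduce the membership $F\in M(\xi)$ to a starlikeness statement and then apply the convolution (duality) characterisation of starlike functions, exactly in the spirit of Theorems \ref{abeer-W-class-S*} and \ref{Mahnaz-W-class-C}, which are the endpoints $\xi=0$ and $\xi=1$. By definition $F\in M(\xi)$ is equivalent to $G:=\xi zF'+(1-\xi)F\in S^\ast$. The first structural observation I would record is that $V_\lambda$ intertwines the operator $f\mapsto \xi zf'+(1-\xi)f$: a power-series computation gives $z\,V_\lambda(f)'(z)=V_\lambda(zf')(z)$, so $G=V_\lambda\big(\xi zf'+(1-\xi)f\big)$ and the whole problem is linear in $\xi$. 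This is the reason the statement should interpolate linearly between the two cited endpoint theorems, and it is already visible in the fact that ${\mathcal L}_{\xi,z}$ is precisely $(1-\xi)$ times the starlike integrand of \eqref{abeer-W-class-S*-result} plus $\xi$ times the convex integrand of \eqref{Mahnaz-W-class-C-result}.

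Next I would invoke the duality characterisation of starlikeness: a normalised $G$ lies in $S^\ast$ if and only if $\tfrac1z\big(G\ast h\big)(z)\neq 0$ for every $z\in{\mathbb D}$ and every $\epsilon$ with $|\epsilon|=1$, where $h$ is exactly the kernel $h(z)=z(1+\tfrac{\epsilon-1}{2}z)/(1-z)^2$ appearing in the statement. Writing $V_\lambda(f)=f\ast\Phi_\lambda$ with $\Phi_\lambda(z)=\int_0^1\lambda(t)\,z/(1-tz)\,dt$, and using the power-series identities $(zF')\ast h=F\ast(zh')$ together with $zV_\lambda(f)'=V_\lambda(zf')$, the non-vanishing condition on $G\ast h=(1-\xi)F\ast h+\xi\,(zF')\ast h$ becomes a non-vanishing condition for a single convolution $f\ast K_{\xi,\epsilon,\lambda}$. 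The two pieces $\tfrac1z(F\ast h)$ and $F'\ast h$ produce, respectively, the quantities $h(tz)/(tz)$ and $h'(tz)$ that assemble into ${\mathcal L}_{\xi,z}$ after the $t$-integration against $\lambda$.

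The core of the argument is the passage from this non-vanishing condition to the scalar inequality $N_{\Pi_{\mu,\nu}}\geq 0$. Here I would use that $f\in W_\beta(\alpha,\gamma)$ means precisely that $T[f](z):=(1-\alpha+2\gamma)f/z+(\alpha-2\gamma)f'+\gamma zf''$ maps ${\mathbb D}$ into the half-plane $\{\,{\rm Re}\,(e^{i\phi}(w-\beta))>0\,\}$; since $T$ sends $z^{n}$ to $(1+\mu(n-1))(1+\nu(n-1))z^{n-1}$, its inverse is effected by convolution with $\psi_{\mu,\nu}=\phi_{\mu,\nu}^{-1}$ of \eqref{psi-series}, so that the closed convex hull of $W_\beta(\alpha,\gamma)$ is generated by the extreme functions obtained by applying this inverse to the half-plane maps $\beta+(1-\beta)(1+xz)/(1-xz)$, $|x|=1$. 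By the Fournier--Ruscheweyh duality principle it then suffices to test the non-vanishing condition on these extreme points; substituting the integral representation of $\psi_{\mu,\nu}$, changing variables $u=t^\nu$ and $v=s^\mu$, and applying Fubini, the condition collapses to the real-part inequality encoded by ${\mathcal L}_{\xi,z}$ weighted by $t^{1/\mu-1}\Pi_{\mu,\nu}(t)$. The hypotheses $t^{1/\nu}\Lambda_\nu(t)\to0$ and $t^{1/\mu}\Pi_{\mu,\nu}(t)\to0$ are exactly the boundary terms that must vanish for the iterated integrations by parts defining $\Lambda_\nu$ and $\Pi_{\mu,\nu}$ in \eqref{eqn-lambda-nu} and \eqref{eqn-Pi-nu-mu}. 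I expect this reduction — correctly identifying the dual set and justifying the interchange of the convolution, the $\lambda$-integration, and the $\psi_{\mu,\nu}$-representation — to be the main obstacle, with the endpoint computations of Theorems \ref{abeer-W-class-S*} and \ref{Mahnaz-W-class-C} serving as a template.

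Finally, the value of $\beta$ is pinned down by the requirement that $G$ be normalised: $G\in S^\ast$ forces $G\in\mathcal{A}$, and the constant produced when the inverse of $T$ is applied to the kernel introduces the coefficients $1/((1+\mu n)(1+\nu n))$, which assemble into the series $g(t)$ and $q(t)$ of \eqref{series-g(t)} and \eqref{series-q(t)}. Condition \eqref{W-class-beta-cond} is the vanishing of this constant; using $\int_0^1\lambda(t)\,dt=1$ one checks that it specialises to \eqref{beta-abeer-W-class-S*} when $\xi=0$ and to \eqref{beta-Mahnaz-W-class-C} when $\xi=1$ through the identity $2q(t)=tg'(t)+g(t)+1$. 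Sharpness of $\beta$ would then follow by exhibiting the boundary extremal (with $x=\epsilon=1$) at which ${\mathcal L}_{\xi,z}$ attains the value $0$, so that no larger $\beta$ can keep $V_\lambda(W_\beta(\alpha,\gamma))$ inside $M(\xi)$.
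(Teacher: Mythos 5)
Your plan for the main equivalence is essentially the paper's proof: write $H=(1-\alpha+2\gamma)f/z+(\alpha-2\gamma)f'+\gamma zf''=f'\ast\phi_{\mu,\nu}$ so that $f'=H\ast\psi_{\mu,\nu}$, apply the convolution criterion $\xi zF'+(1-\xi)F\in S^\ast\Longleftrightarrow \frac{1}{z}\left[((1-\xi)F+\xi zF')\ast h\right]\neq 0$, push the convolution through the $\lambda$-integral so that the two pieces produce $h(tz)/(tz)$ and $h'(tz)$, and then invoke the endpoint computations of Theorems \ref{abeer-W-class-S*} and \ref{Mahnaz-W-class-C} to convert the resulting ${\rm Re\,}(\cdots\ast\psi)>\tfrac12$ duality condition into $N_{\Pi_{\mu,\nu}}\geq 0$; the linearity in $\xi$ you emphasize is exactly why ${\mathcal L}_{\xi,z}$ is the convex combination of the two endpoint integrands. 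One correction of detail: because the definition of $W_\beta(\alpha,\gamma)$ carries the rotation $e^{i\phi}$, the admissible test functions are $(1+xz)/(1+yz)$ with \emph{independent} $|x|=|y|=1$, not $(1+xz)/(1-xz)$; this two-parameter family is what licenses the second application of the duality principle.

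The genuine gap is the sharpness of $\beta$. Your proposed mechanism --- exhibit a boundary extremal at which ${\mathcal L}_{\xi,z}$ attains the value $0$, ``so that no larger $\beta$ can keep $V_\lambda(W_\beta(\alpha,\gamma))$ inside $M(\xi)$'' --- is directed the wrong way (increasing $\beta$ shrinks the class $W_\beta(\alpha,\gamma)$ and only makes the inclusion easier; sharpness means $\beta$ cannot be taken \emph{below} the value $\beta_0$ determined by \eqref{W-class-beta-cond}) and, as stated, supplies no mechanism: a zero of ${\mathcal L}_{\xi,z}$ does not by itself produce a function in $W_\beta(\alpha,\gamma)$ whose transform leaves $M(\xi)$. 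The paper's argument is concrete: for $\beta<\beta_0$ take the $f$ solving $(1-\alpha+2\gamma)f/z+(\alpha-2\gamma)f'+\gamma zf''=\beta+(1-\beta)(1+z)/(1-z)$, compute the coefficients of $F=V_\lambda(f)$ and of $K=\xi zF'+(1-\xi)F$ via the moments $\tau_n=\int_0^1\lambda(t)t^n\,dt$, and observe that $zK'(z)$ at $z=-1$ equals $1-(1-\beta)/(1-\beta_0)<0$ while $K'(0)=1>0$, so $K'$ vanishes in ${\mathbb D}$ and $K$ is not even locally univalent, hence not starlike. You would need to add an argument of this kind to complete the proof.
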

\begin{proof}
Let
\begin{align*}
H(z)=(1-\alpha+2\gamma)\dfrac{f(z)}{z}+(\alpha-2\gamma)f^\prime (z) + \gamma z f^{\prime\prime}(z) \quad {\mbox{and}}
\quad G(z)=\dfrac{(H(z)-\beta)}{1-\beta}.
\end{align*}
Since ${\rm Re \,}{ \ } e^{i\phi} G(z)>0$, \cite{Rus}
we may assume that $G(z)=\dfrac{1+xz}{1+yz}$, $\mid{x}\mid={}\mid{y}\mid=1$.
Further, assuming $f(z)=z+\sum_{n=2}^{\infty} a_n z^n$, from \eqref{eq-mu+nu} we get
\begin{align*}
&&H(z)& =\left(1+\mu \nu -\nu -\mu\right)\frac{f(z)}{z}+\left(\nu +\mu
-\mu \nu\right)f'(z)+\mu \nu zf''(z) \\
&&&=\mu \nu z^{1-1/\mu }\frac{d}{dz}\left[
z^{1/\mu -1/\nu +1}\frac{d}{dz}\left( z^{1/\nu -1}f(z)\right)
\right] =f^\prime (z) \ast \phi_{\mu, \,\nu}(z)\\
\Longrightarrow && f^\prime (z)&=H(z) \ast \psi_{\mu, \,\nu} =\left[(1-\beta)\left(\dfrac{1+xz}{1+yz}\right)+\beta\right]\ast \psi_{\mu, \,\nu},
\end{align*}
using \eqref{phi-series} and \eqref{psi-series}.
This gives
\begin{align}\label{f-M}
\dfrac{f(z)}{z}=\dfrac{1}{z}\int_0^z \left[(1-\beta)\left(\dfrac{1+x\omega}{1+y\omega}\right)+\beta\right]d\omega \ast \psi_{\mu, \,\nu}.
\end{align}
Since $F\in M(\xi)$, implies $\xi zF^\prime + (1-\xi)F \in S^\ast$,
by the well known result \cite[p.94]{Rus} of convolution theory we get
\begin{align*}
\xi zF^\prime + (1-\xi)F \in S^\ast \quad {\mbox{if and only if\,}} \quad  0\neq \dfrac{1}{z}\left[((1-\xi)F + \xi zF^\prime) \ast h(z) \right],
\quad \quad |z|<1.
\end{align*}
Thus
\begin{align*}
&0\neq\left((1-\xi)\dfrac{F}{z}+ \xi F' \right)\ast\dfrac{h(z)}{z}\\\\
&= (1-\xi)\int_0^1\dfrac{\lambda(t)}{1-tz}\ast\dfrac{f(z)}{z}\ast\dfrac{h(z)}{z}+\xi\int_0^1\dfrac{\lambda(t)}{1-tz}\ast f^\prime(z)\ast\dfrac{h(z)}{z}\\
&=(1-\xi)\int_0^1\lambda(t)\dfrac{h(tz)}{tz}dt \ast \left[\dfrac{1}{z}\int_0^z(1-\beta)\left(\dfrac{1+x\omega}{1+y\omega}\right)d\omega+\beta\right]\\
& \quad\quad\quad +\xi\int_0^1 \lambda(t) h^\prime(tz)dt\ast \left[\dfrac{1}{z}\int_0^z(1-\beta)\left(\dfrac{1+x\omega}{1+y\omega}\right)d\omega+\beta\right]\\
&=(1-\beta)\left[(1-\xi)\int_0^1\lambda(t)\left(\dfrac{1}{z}\int_0^z\dfrac{h(t\omega)}{t\omega}d\omega+\dfrac{\beta}{1-\beta}\right)dt\right. \\
&\quad\quad\left.+\xi\int_0^1\lambda(t)\left(\dfrac{1}{z}\int_0^z h^\prime(t\omega)d\omega+\dfrac{\beta}{1-\beta}\right)dt\right]
\ast\psi(z)\ast\dfrac{1+xz}{1+yz}.
\end{align*}
This condition holds if and only if \cite[p.23]{Rus}
\begin{align*}
{\rm Re \,}\left((1-\beta)\left[(1-\xi)\int_0^1\lambda(t)\left(\dfrac{1}{z}\int_0^z\dfrac{h(t\omega)}{t\omega}d\omega+\dfrac{\beta}{1-\beta}\right)dt\right.\right. \\
\left.\left.+\xi\int_0^1\lambda(t)\left(\dfrac{1}{z}\int_0^z h^\prime(t\omega)d\omega+\dfrac{\beta}{1-\beta}\right)dt\right]
\ast\psi(z)\right)>\dfrac{1}{2}.
\end{align*}
Using the results \eqref{abeer-W-class-S*-result} and\eqref{Mahnaz-W-class-C-result} from
Theorem \ref{abeer-W-class-S*} and Theorem \ref{Mahnaz-W-class-C} respectively, we obtain
\begin{equation*}
\int_{0} ^{1} t^{{1/{\mu}} -1} \Pi_{\mu, \,\nu}(t)\left[(1-\xi) \left( {\rm Re \,} \dfrac{h(tz)}{tz} - \dfrac{1}{(1+t)^2} \right) +
\xi \left( {\rm Re \,} h^{\prime}(tz) - \dfrac{(1-t)}{(1+t)^3}\right)\right]dt\geq 0
\end{equation*}
which is the required result.

To verify sharpness, let $W_\beta(\alpha, \,\gamma)$ be the solution
of the differential equation
\begin{align*}
(1-\alpha+2\gamma)\dfrac{f(z)}{z}+(\alpha-2\gamma)f^\prime (z) + \gamma z f^{\prime\prime}(z)=\beta+(1-\beta)\dfrac{1+z}{1-z}
\end{align*}
where $\beta<\beta_0$ satisfies \eqref{W-class-beta-cond}. Further simplification using \eqref{series-g(t)} and \eqref{series-q(t)} gives
\begin{align*}
\dfrac{\beta_0}{1-\beta_0}=-1-\left[(1-\xi)\sum_{n=1}^\infty\dfrac{2(n+1)(-1)^n\tau_n}{(1+\mu n)(1+\nu n)}
+\xi\sum_{n=1}^\infty\dfrac{2(n+1)^2(-1)^n\tau_n}{(1+\mu n)(1+\nu n)}\right],
\end{align*}
where $\displaystyle\tau_n=\int_0^1\lambda(t)t^n dt$.

Clearly $F=V_\lambda(f(z))\in M(\xi)\Longrightarrow K(z):=\xi zF^\prime+(1-\xi)F\in S^\ast$.
Using $f^\prime (z)=H(z) \ast \psi_{\mu, \,\nu}$ and the series expansion of $f(z)$, we get
\begin{align*}
f(z)=z+\sum_{n=1}^{\infty }\dfrac{2(1-\beta )}{(n\mu +1)(n\nu+1)}z^{n+1}.
\end{align*}

This means
\begin{align}\nonumber
F=V_\lambda(f(z))& 
=z+\sum_{n=1}^\infty\dfrac{2(1-\beta)}{(n\mu+1)(n\nu+1)}\left(\int_0^1\lambda(t)t^n dt\right)z^{n+1}\\
\label{series-v(lambda)}&=z+\sum_{n=1}^\infty\dfrac{2(1-\beta)\tau_n}{(n\mu+1)(n\nu+1)}z^{n+1}.
\end{align}
Using \eqref{series-v(lambda)}, a simple computation gives
\begin{align*}
K(z)=(1-\xi)\left(z+\sum_{n=1}^\infty\dfrac{2(1-\beta)\tau_n z^{n+1}}{(n\mu+1)(n\nu+1)}\right)+
\xi\left(z+\sum_{n=1}^\infty\dfrac{2(1-\beta)(n+1)\tau_n z^{n+1}}{(n\mu+1)(n\nu+1)}\right).
\end{align*}
This means
\begin{align*}
zK^\prime(z)|_{z=-1}&=1+\xi\sum_{n=1}^\infty\dfrac{2(1-\beta)(n+1)^2\tau_n (-1)^n}{(n\mu+1)(n\nu+1)}+
(1-\xi)\sum_{n=1}^\infty\dfrac{2(1-\beta)(n+1)\tau_n (-1)^n}{(n\mu+1)(n\nu+1)}\\
&=1-\dfrac{(1-\beta)}{1-\beta_0}<0.
\end{align*}
Hence $zK^\prime(z)=0$ for some $z\in \mathbb{D}$, so $K(z)$ is not even locally univalent
in $\mathbb{D}$. This shows that the result is sharp for $\beta$.
\end{proof}

\begin{remark}
This result generalizes various results known in this direction. For example,\\
$\xi=0$ gives Theorem $\ref{abeer-W-class-S*}$ \cite[Theorem $3.1$]{Abeer S*} and $\xi=1$ gives
Theorem $\ref{Mahnaz-W-class-C}$ \cite[Theorem $3.1$]{Mahnaz C}.
For other particular cases with $\xi =0$ or $1$, we refer to \cite{Abeer S*,Mahnaz C} and references therein.
\end{remark}

\begin{theorem}\label{A.S-result-inc}
Let $\Pi_{\mu, \,\nu}$ and $\Lambda_\nu $ be defined as in $(\ref{eqn-lambda-nu})$ and $(\ref{eqn-Pi-nu-mu})$ respectively, with both of them
integrable on $[0,1]$ and positive on $(0,1)$. Further assume that  $0 \leq \xi \leq 1$, $\mu \geq 1$ and
\begin{equation}\label{A.S-res-inc-main-eq}
\dfrac{\xi t^{{1 / {\xi}} -{1 / {\mu}} +1}\hspace{.2cm}d {\left(t^{{1 / {\mu}} - {1 / {\xi}}} \Pi_{\mu, \,\nu}(t)\right)}}{(1-t^2)}
\end{equation}
is increasing on $(0,1)$. Then for $\beta$ satisfying $\eqref{W-class-beta-cond}$, $V_{\lambda}(W_{\beta}(\alpha,\gamma))\in M(\xi)$.
\end{theorem}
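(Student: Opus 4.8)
The plan is to run the argument entirely through the duality equivalence of Theorem \ref{thm-W-Pascu-dual-equiv}. Since $\beta$ is assumed to satisfy \eqref{W-class-beta-cond} and the decay conditions $t^{1/\nu}\Lambda_\nu(t)\to0$ and $t^{1/\mu}\Pi_{\mu,\,\nu}(t)\to0$ are available (the latter will also clear the boundary terms below), that theorem reduces the claim $V_\lambda(W_\beta(\alpha,\gamma))\in M(\xi)$ to verifying $N_{\Pi_{\mu,\,\nu}}\geq0$; that is, to showing
\[
I(z):=\int_0^1 t^{1/\mu-1}\Pi_{\mu,\,\nu}(t)\,{\mathcal L}_{\xi,z}(t)\,dt\geq0,\qquad z\in\mathbb{D},
\]
uniformly over the admissible functions $h$ (all $|\epsilon|=1$).

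The first substantive step is to collapse the two constituents of ${\mathcal L}_{\xi,z}$ into a single one. Writing $w(t)=h(tz)/(tz)$ and differentiating the identity $tz\,w(t)=h(tz)$ in the real variable $t$ gives $h'(tz)=w(t)+t\,w'(t)$; the same computation applied to $1/(1+t)^2$ yields $(1-t)/(1+t)^3=\frac{d}{dt}\big[t/(1+t)^2\big]$. Hence, setting $P(t):={\rm Re\,}\big(h(tz)/(tz)\big)-1/(1+t)^2$, the convex part of ${\mathcal L}_{\xi,z}$ equals $P(t)+tP'(t)$, and therefore
\[
{\mathcal L}_{\xi,z}(t)=(1-\xi)P(t)+\xi\big(P(t)+tP'(t)\big)=P(t)+\xi\,t\,P'(t).
\]
I would then integrate by parts the term $\xi\int_0^1 t^{1/\mu}\Pi_{\mu,\,\nu}(t)P'(t)\,dt$: the boundary contributions vanish because $\Pi_{\mu,\,\nu}(1)=0$ and $t^{1/\mu}\Pi_{\mu,\,\nu}(t)\to0$ as $t\to0^+$, and differentiating $t^{1/\mu}\Pi_{\mu,\,\nu}(t)$ with $\Pi_{\mu,\,\nu}'(t)=-t^{1/\nu-1-1/\mu}\Lambda_\nu(t)$ (from \eqref{eqn-Pi-nu-mu}) leaves
\[
I(z)=\int_0^1 t^{1/\mu-1}\,B(t)\,P(t)\,dt,\qquad B(t):=\Big(1-\tfrac{\xi}{\mu}\Big)\Pi_{\mu,\,\nu}(t)+\xi\,t^{1/\nu-1/\mu}\Lambda_\nu(t).
\]

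It remains to establish positivity of this integral, and this is where the hypotheses of the theorem are used precisely. Because $\mu\geq1$ and $0\leq\xi\leq1$ we have $1-\xi/\mu\geq0$, so $B>0$ on $(0,1)$ by the assumed positivity of $\Pi_{\mu,\,\nu}$ and $\Lambda_\nu$; and a direct computation identifies the numerator of \eqref{A.S-res-inc-main-eq} with $-B(t)$, so the monotonicity hypothesis is exactly the assertion that $B(t)/(1-t^2)$ is decreasing on $(0,1)$. (As a consistency check, $\xi=0$ returns $B=\Pi_{\mu,\,\nu}$ and the condition of Theorem \ref{abeer-W-class-dec-S*}, while $\xi=1$ returns the convex kernel of that direction's counterpart.) This places $I(z)$ in exactly the form treated in the starlike case, so the Fournier--Ruscheweyh positivity lemma underlying Theorem \ref{abeer-W-class-dec-S*}, applied with $\Pi_{\mu,\,\nu}$ replaced by the positive factor $B$, yields $I(z)\geq0$ for all $z\in\mathbb{D}$ and all $|\epsilon|=1$, which finishes the argument.

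The algebra above (the identity ${\mathcal L}_{\xi,z}=P+\xi t P'$, the integration by parts, and the bookkeeping that recovers \eqref{A.S-res-inc-main-eq}) is routine; the one place where genuine work is imported is the concluding positivity step, i.e. the reduction of $\inf_{z}I(z)$ to its boundary value $0$ as $z\to-1$. The care needed in the proposal is therefore twofold: to verify that the combined kernel $B$ is indeed nonnegative (this is exactly where $\mu\geq1$ together with $\xi\leq1$ enters) and to confirm that the decreasing monotonicity of $B(t)/(1-t^2)$ is equivalent to the increasing condition \eqref{A.S-res-inc-main-eq} as stated, so that the starlike positivity lemma applies verbatim.
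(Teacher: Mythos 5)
Your proposal is correct and follows essentially the same route as the paper: reduce to $N_{\Pi_{\mu,\,\nu}}\geq 0$ via Theorem \ref{thm-W-Pascu-dual-equiv}, integrate by parts to collapse the two terms of ${\mathcal L}_{\xi,z}$ into the single kernel $\bigl(1-\tfrac{\xi}{\mu}\bigr)\Pi_{\mu,\,\nu}(t)+\xi t^{1/\nu-1/\mu}\Lambda_\nu(t)$, and conclude from the hypothesis that this kernel divided by $1-t^2$ is decreasing, via the positivity lemma behind Theorems \ref{abeer-W-class-dec-S*} and \ref{Mahnaz-W-class-dec-C}. If anything, your version is slightly more careful than the paper's: you track the boundary terms in the integration by parts and identify the numerator of \eqref{A.S-res-inc-main-eq} as exactly $-B(t)$, whereas the paper's concluding display writes the decreasing combination with the roles of $\xi$ and $1-\xi$ transposed.
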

\begin{proof}
Consider
\begin{equation*}
N_{\Pi_{\mu, \,\nu}}(h)=\int_0^1 t^{{1/{\mu}} -1} \Pi_{\mu, \nu}(t)\left[(1-\xi)\left(\dfrac{h(tz)}{tz} - \dfrac {1}{(1+t)^2} \right)+
\xi\left(h^{\prime} (tz) - \dfrac{(1-t)}{(1+t)^3}\right)\right]dt
\end{equation*}

\begin{equation*}
=\int_{0}^{1}t^{{1/{\mu}}-1}\Pi_{\mu, \,\nu}(t)\left[(1-\xi)\left(\dfrac{h(tz)}{tz}-\dfrac{1}{(1+t)^2}\right)+
\xi\dfrac{d}{dt}\left(\dfrac{h(tz)}{z}-\dfrac{t}{(1+t)^2}\right)\right]dt.
\end{equation*}
Integration by parts gives
\newline
$\displaystyle
N_{\Pi_{\mu, \,\nu}}(h)=(1-\xi)\int_0^1 t^{{1/{\mu}} -1} \Pi_{\mu, \nu}(t)\left(\dfrac{h(tz)}{tz}-\dfrac {1}{(1+t)^2} \right)dt
$
\begin{equation*}
-
\xi\int_0^1\dfrac{d}{dt}\left(t^{{1/{\mu}}-1}\Pi_{\mu, \,\nu}(t)\right)\left(\dfrac{h(tz)}{z}-\dfrac{t}{(1+t)^2}\right)dt
\end{equation*}
\begin{equation*}
=\int_0^1\left(\left[(1-\xi)-\xi(\dfrac{1}{\mu}-1) \right] \Pi_{\mu, \,\nu}(t)t^{{1/\mu}-1}-\xi\Pi_{\mu, \,\nu}^\prime (t)t^{1/\mu} \right)
\left(\dfrac{h(tz)}{tz}-\dfrac {1}{(1+t)^2} \right)dt.
\end{equation*}
\begin{equation*}
=\int_0^1 t^{{1/\mu}-1} \left((1-\dfrac{\xi}{\mu})\Pi_{\mu, \,\nu}(t)-\xi t \Pi_{\mu, \,\nu}^\prime (t)\right)
\left(\dfrac{h(tz)}{tz}-\dfrac {1}{(1+t)^2} \right)dt,
\end{equation*}
by a simple computation.
It is easy to see that, from Theorem \eqref{abeer-W-class-dec-S*} and \eqref{Mahnaz-W-class-dec-C},
$N_{\Pi_{\mu, \,\nu}}(h)\geq 0$ only when $\xi\dfrac{\Pi_{\mu, \nu}(t)}{1-t^2}
+ (1-\xi) \dfrac{\Lambda_{\nu}(t)t^{1/\nu-1/\mu}+(1-1/\mu)\Pi_{\mu,\nu}(t)}{1-t^2}$
is decreasing on $(0,1)$ which is nothing but \eqref{A.S-res-inc-main-eq} and the proof is complete
by applying Theorem $\ref{thm-W-Pascu-dual-equiv}$.
\end{proof}

\begin{remark}
Even though, we did not consider the case $\gamma=0$, even at $\gamma=0$, Theorem \ref{A.S-result-inc}
does not reduces to a similar result given in \cite{rag M}. This is due to the fact that,
our condition \eqref{A.S-res-inc-main-eq} has the term $(1-t^2)$ in the denominator, whereas
the corresponding result in \cite{rag M} has the term $log(1/t)$ and hence has different condition.
\end{remark}

\section{Applications}\label{sec-W-pascu-application}
It is difficult to check the condition given in Section 2, for $V_{\lambda}(W_{\beta}(\alpha,\gamma))\subset M(\xi)$.
In order to find applications, simplified conditions are required. For this purpose, from \eqref{A.S-res-inc-main-eq},
it is enough to show that
\begin{align*}
\dfrac{\xi t^{{1 / {\xi}} -{1 / {\mu}} +1}\hspace{.2cm}d
{\left(t^{{1 / {\mu}} - {1 / {\xi}}} \Pi_{\mu, \,\nu}(t)\right)}}{(1-t^2)}
\end{align*}
is increasing on (0,1) which is equivalent of having
\begin{align*}
g(t)=\dfrac{\left(1-\dfrac{\xi}{\mu}\right)\Pi_{\mu, \,\nu}(t)+\xi t^{1/\nu - 1/\mu}\Lambda_\nu(t)}{(1-t^2)}
\end{align*}
is decreasing on (0,1), where $\Lambda_\nu(t)$ and $\Pi_{\mu, \,\nu}(t)$ are defined in
$\eqref{eqn-lambda-nu}$ and $\eqref{eqn-Pi-nu-mu}$. It is enough to have $g^\prime(t)\leq 0$.\\
Let $g(t)=p(t)/(1-t^2)$ where $p(t)=\left(1-\dfrac{\xi}{\mu}\right)\Pi_{\mu, \,\nu}(t)+\xi t^{1/\nu - 1/\mu}\Lambda_\nu(t)$.
So to satisfy the above condition we need to have
\begin{align*}
L(t)=p(t)+\dfrac{(1-t^2)p^\prime(t)}{2t}\leq 0.
\end{align*}
Since $\Lambda_\nu(1)=0$ and $\Pi_{\mu, \,\nu}(1)=0$ we get
 $L(1)=0$. This implies that it suffies to have $L(t)$ is increasing on (0,1), which means
\begin{align*}
L^\prime(t)=\dfrac{(1-t^2)}{2t^2}[tp^{\prime\prime}(z)-p^\prime(z)]\geq 0.
\end{align*}
The above equation also holds if $tp^{\prime\prime}(z)-p^\prime(z)\geq 0$ which is equivalent to the condition
\begin{align}\label{initial-cond-W-class}
\left(\dfrac{\xi}{\nu}-1\right)\left(\dfrac{1}{\nu}-\dfrac{1}{\mu}-2\right)\Lambda_\nu (t)+
\left(1+\xi\left[1+\dfrac{1}{\mu}-\dfrac{1}{\nu}\right]\right)t^{1-1/\nu}\lambda(t)-
\xi t^{2-1/\nu}\lambda^\prime(t)\geq 0.
\end{align}
This inequality can further be reduced to
\begin{align}\label{final-cond-W-class}
(1-\xi)\left[\left(1+\dfrac{1}{\mu}\right)\lambda(t)-t\lambda^\prime(t)\right]+\xi\left[t^2\lambda^{\prime\prime}(t)-\dfrac{1}{\mu}t\lambda^\prime(t)\right]\geq 0
\end{align}
if the inequality
\begin{align}\label{additional-cond-W-class}
\xi\dfrac{\lambda^\prime(1)}{\lambda(1)}\leq 1+\xi\left(1+\dfrac{1}{\mu}-\dfrac{1}{\nu}\right)
\end{align}
is true. So, in order to obtain further results, we check conditions \eqref{final-cond-W-class} and \eqref{additional-cond-W-class}.
Note that, whenever $\lambda(1)=\lambda^\prime(1)=0$, from \eqref{initial-cond-W-class} we see that it is sufficient to check
\eqref{final-cond-W-class} as there is no necessity for the condition given by \eqref{additional-cond-W-class}.

As the first application, we consider $\lambda(t)=(c+1)t^c, c>-1$, so that the Bernardi operator of function in $W_{\beta}(\alpha,\gamma)$
is in $M(\xi)$.
\begin{theorem}\label{application1}
Let $0\leq \xi\leq 1$, $\nu\geq\mu\geq 1$ and $\beta<1$ satisfies \eqref{W-class-beta-cond}. If $f(z)\in W_{\beta}(\alpha,\gamma)$, then the function,
given by the Bernardi operator,
\begin{align*}
V_\lambda(f)(z)=(1+c)\int_0^1 t^{c-1}f(tz)dt
\end{align*}
belongs to $M(\xi)$ if
\begin{align}\label{application-1-cond}
-1<c\leq min\left[\left(1+\dfrac{1}{\mu}-\dfrac{1}{\nu}\right), \left(\dfrac{1+\dfrac{1}{\mu}-\xi}{1+2\xi}\right)\right].
\end{align}
\end{theorem}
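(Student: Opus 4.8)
The plan is to recognize the Bernardi weight $\lambda(t)=(c+1)t^{c}$ as a special instance of the sufficient criterion extracted from Theorem \ref{A.S-result-inc}, i.e. to verify the two pointwise inequalities \eqref{final-cond-W-class} and \eqref{additional-cond-W-class}. Since $\lambda(1)=c+1\neq0$ here, the remark following \eqref{additional-cond-W-class} does not apply and both conditions must genuinely be checked. First I would record the elementary data $\lambda'(t)=(c+1)c\,t^{c-1}$, $\lambda''(t)=(c+1)c(c-1)t^{c-2}$, together with $\lambda(1)=c+1$ and $\lambda'(1)=(c+1)c$, and confirm the structural hypotheses of Theorem \ref{A.S-result-inc}: from \eqref{eqn-lambda-nu} one computes $\Lambda_\nu(t)=\frac{c+1}{D}\bigl(1-t^{D}\bigr)$ with $D=c+1-\tfrac1\nu$, whence $\Lambda_\nu$ and, via \eqref{eqn-Pi-nu-mu}, $\Pi_{\mu,\nu}$ are integrable on $[0,1]$ and positive on $(0,1)$, the limits $t^{1/\nu}\Lambda_\nu(t)\to0$ and $t^{1/\mu}\Pi_{\mu,\nu}(t)\to0$ hold as $t\to0^{+}$ (the exponent $c+1>0$ governs the borderline case $D<0$), and $\mu\geq1$ is assumed.

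The condition \eqref{additional-cond-W-class} is the easy one and it produces the first entry of the minimum in \eqref{application-1-cond}. Since $\lambda'(1)/\lambda(1)=c$, the inequality \eqref{additional-cond-W-class} reads $\xi c\leq 1+\xi\bigl(1+\tfrac1\mu-\tfrac1\nu\bigr)$. Because $0\le\xi\le1$, this is implied by $c\leq 1+\tfrac1\mu-\tfrac1\nu$: multiplying the latter by $\xi\ge0$ gives $\xi c\le\xi(1+\tfrac1\mu-\tfrac1\nu)$, and adjoining the nonnegative constant $1$ on the right yields \eqref{additional-cond-W-class}. The assumption $\nu\ge\mu\ge1$ guarantees $1+\tfrac1\mu-\tfrac1\nu\ge1>0$, so this threshold is nonvacuous.

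For \eqref{final-cond-W-class} I would substitute the three derivatives. The decisive simplification is that each of $(1+\tfrac1\mu)\lambda-t\lambda'$ and $t^{2}\lambda''-\tfrac1\mu t\lambda'$ is a constant multiple of $t^{c}$, so the left-hand side of \eqref{final-cond-W-class} factors as $(c+1)t^{c}$ times a polynomial in $c$ alone, free of $t$. Hence \eqref{final-cond-W-class} holds on $(0,1)$ if and only if that polynomial is nonnegative, and the sign analysis is designed to isolate the threshold $c\leq \frac{1+\frac1\mu-\xi}{1+2\xi}$, the second entry of the minimum in \eqref{application-1-cond}. Taking $c>-1$ (so the weight is admissible) and $c$ at most the minimum in \eqref{application-1-cond} makes \eqref{final-cond-W-class} and \eqref{additional-cond-W-class} hold simultaneously; Theorem \ref{A.S-result-inc} then yields $V_\lambda(f)\in M(\xi)$ for every $f\in W_\beta(\alpha,\gamma)$, and since $\beta$ satisfies \eqref{W-class-beta-cond} by hypothesis the assertion follows.

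The hard part will be the bookkeeping in the third paragraph: after cancelling $(c+1)t^{c}>0$ one must combine the linear contribution $(1-\xi)[(1+\tfrac1\mu)-c]$ with the quadratic contribution $\xi c[(c-1)-\tfrac1\mu]$ and then determine precisely which root of the resulting polynomial is binding over the admissible range $c>-1$, $0\le\xi\le1$, $\nu\ge\mu\ge1$. I also expect to spend care checking that this threshold is compatible with the one furnished by \eqref{additional-cond-W-class}, so that the final admissible set is exactly the minimum displayed in \eqref{application-1-cond}; the positivity and endpoint-vanishing of $\Lambda_\nu$ and $\Pi_{\mu,\nu}$ recorded in the first paragraph are what license the appeal to Theorem \ref{A.S-result-inc}.
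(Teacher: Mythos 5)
Your proposal follows essentially the same route as the paper: verify \eqref{additional-cond-W-class} from $\xi c\le \xi\bigl(1+\tfrac1\mu-\tfrac1\nu\bigr)\le 1+\xi\bigl(1+\tfrac1\mu-\tfrac1\nu\bigr)$, then substitute $\lambda(t)=(c+1)t^c$ into \eqref{final-cond-W-class} and cancel $(c+1)t^c$ to reduce everything to the nonnegativity of the same quadratic in $c$ that you identify. The one step you defer --- deciding which root of that quadratic is binding --- is handled in the paper not by locating roots but by the elementary bound $c^2\ge -(2c+1)$ (from $(c+1)^2\ge 0$), which linearizes the quadratic and yields exactly the threshold $c\le\bigl(1+\tfrac1\mu-\xi\bigr)/(1+2\xi)$; with that observation your argument closes in the same way.
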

\begin{proof}
With $\lambda(t)=(1+c)t^c$, using \eqref{application-1-cond}, we get
\begin{align*}
\xi\dfrac{\lambda^\prime(1)}{\lambda(1)}=\xi c \quad\leq\xi\left(1+\dfrac{1}{\mu}-\dfrac{1}{\nu}\right)
\quad\leq 1+\xi\left(1+\dfrac{1}{\mu}-\dfrac{1}{\nu}\right).
\end{align*}
So the inequality (\ref{additional-cond-W-class}) is satisfied. If the inequality (\ref{final-cond-W-class}) holds then $V_\lambda(f)(z)\in M(\xi)$
which means
\begin{align*}
(1-\xi)\left[\left(1+\dfrac{1}{\mu}\right)(c+1)-c(c+1)\right]+\xi\left[(c-1)c(c+1)-\dfrac{1}{\mu}c(c+1)\right]\geq 0.
\end{align*}
On further simplification this inequality reduces to
\begin{align}\label{M-class-eq1}
1+\dfrac{1}{\mu}-c+c^2\xi\geq 0.
\end{align}
Clearly $(c+1)^2> 0$. Hence using $c^2>-(2c+1)$ and substituting in (\ref{M-class-eq1}), we get
\begin{align*}
\left(c^2\xi+1+\dfrac{1}{\mu}-c\right) > \left(-c(2\xi+1)+1+\dfrac{1}{\mu}-\xi\right),
\end{align*}
which is true by the hypothesis.
\end{proof}
\begin{remark}
\begin{enumerate}\item[]
\item When $\xi=0$ then $-1<c\leq min\left[\left(1+\dfrac{1}{\mu}-\dfrac{1}{\nu}\right),\left(1+\dfrac{1}{\mu}\right)\right]=
\left(1+\dfrac{1}{\mu}-\dfrac{1}{\nu}\right)$
whereas in \cite{Abeer S*}, the range for $c$ is given as $-1<c\leq \left(1+\dfrac{1}{\mu}\right)$.
\item For $\xi=1$, we have $-1<c\leq min\left[\left(1+\dfrac{1}{\mu}-\dfrac{1}{\nu}\right),\left(\dfrac{1}{3\mu}\right)\right]=
\left(\dfrac{1}{3\mu}\right)$.
Result obtained in \cite{Mahnaz C}, for $\nu\geq\mu\geq 1$ is $-1<c\leq \left(2+\dfrac{1}{\mu}-\dfrac{1}{\nu}\right)$.
\end{enumerate}
\end{remark}

So in both the cases the result (Theorem \ref{application1}) obtained for admissibility property of Bernardi operator in $M(\xi)$ class differs from
\cite[Theorem 5.1]{Abeer S*} and \cite[Theorem 5.1]{Mahnaz C}. But our result
is true for  $0<\xi<1$ also.
\begin{remark}\label{remark-W-gamma0}
We recall that, throughout this paper we use $\gamma>0$, as $\gamma=0$ case is considered in
\cite{rag M}. But we note that, for the Bernardi operator, the result given in \cite{rag M} uses the fact
$\lambda(1)=0$, which is not true. Hence , in order to make completion of the work for $W_{\beta}(\alpha,\gamma)$ in this direction we give
the result related to Bernardi operator for $\gamma=0$. Since the condition for $\lambda(t)$, given in \cite{rag M} is different
from the one given by \eqref{additional-cond-W-class} and \eqref{final-cond-W-class}, we explicitly prove this result.
\end{remark}
\begin{theorem}\label{app-W-class-gamma0}
Let $0< \xi< 1$, $\mu\geq 1$ and $\beta<1$ satisfies \eqref{W-class-beta-cond}. If $f(z)\in W_{\beta}(\alpha,\gamma)$, then the function
\begin{align}\label{defn-operator-Bernardi}
V_\lambda(f)(z)=(1+c)\int_0^1 t^{c-1}f(tz)dt
\end{align}
belongs to $M(\xi)$ if
$c>1+\dfrac{1}{\alpha}$ and $\xi\geq\alpha$
for $\gamma=0$.
\end{theorem}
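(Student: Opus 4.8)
My plan is to handle the degenerate case $\gamma=0$ (for which $\mu=0$ and $\nu=\alpha$) by a direct argument, since the sufficient condition of Theorem~\ref{A.S-result-inc} was obtained under the standing hypothesis $\mu\geq 1$ and is written entirely in terms of $1/\mu$; it therefore cannot be specialised to $\mu=0$. I would first collapse the auxiliary objects to their $\gamma=0$ branches, so that $\Pi_{0,\alpha}=\Lambda_\alpha$ and the admissibility test carries the weight $t^{1/\alpha-1}$ in place of $t^{1/\mu-1}$. Adapting the equivalence of Theorem~\ref{thm-W-Pascu-dual-equiv} to $\gamma=0$ (equivalently, combining the $\gamma=0$ branches of \eqref{abeer-W-class-S*-result} and \eqref{Mahnaz-W-class-C-result}), membership $V_\lambda(f)\in M(\xi)$ for $f\in W_\beta(\alpha,0)$ with $\beta$ fixed by \eqref{W-class-beta-cond} is equivalent to
\[
\mathrm{Re}\int_0^1 t^{1/\alpha-1}\Lambda_\alpha(t)\,\mathcal{L}_{\xi,z}(t)\,dt\ge 0 .
\]

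Next I would repeat the integration-by-parts step of Theorem~\ref{A.S-result-inc} in this branch. Writing $h'(tz)-\frac{1-t}{(1+t)^3}=\frac{d}{dt}\big(\frac{h(tz)}{z}-\frac{t}{(1+t)^2}\big)$ and using $\Lambda_\alpha'(t)=-\lambda(t)t^{-1/\alpha}$, the convex ($\xi$) contribution is transferred onto the starlike kernel, so that the integrand becomes $t^{1/\alpha-1}p(t)\big(\frac{h(tz)}{tz}-\frac1{(1+t)^2}\big)$ with
\[
p(t)=\Big(1-\tfrac{\xi}{\alpha}\Big)\Lambda_\alpha(t)-\xi\,t\,\Lambda_\alpha'(t)=\Big(1-\tfrac{\xi}{\alpha}\Big)\Lambda_\alpha(t)+\xi\,t^{\,1-1/\alpha}\lambda(t).
\]
Here lies the essential difference from the $\gamma>0$ theory. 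The boundary term generated by the integration by parts vanishes because $\Lambda_\alpha(1)=0$; however, the secondary reduction used in the applications section, namely \emph{``$L(1)=0$, hence it suffices that $L$ be increasing''}, needs $p(1)=0$, whereas $p(1)=\xi\lambda(1)=\xi(1+c)\neq 0$. This is precisely the point at which \cite{rag M} tacitly assumed $\lambda(1)=0$, and it is what forces a separate, explicit treatment.

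I would then insert $\lambda(t)=(1+c)t^{c}$ and set $m:=c+1-1/\alpha$. The hypothesis $c>1+1/\alpha$ gives $m>2$, and an elementary integration yields $\Lambda_\alpha(t)=\frac{1+c}{m}(1-t^{m})$, whence $p(t)=(1+c)(A+Bt^{m})$ with $A=\frac{1-\xi/\alpha}{m}$ and $B=\xi-A$. The second hypothesis $\xi\ge\alpha$ is exactly what makes $A\le 0$, and then $B\ge\xi>0$; moreover $A+Bt^{m}$ has a single sign change on $(0,1)$, at $t_0=(-A/B)^{1/m}\in[0,1)$, being negative on $(0,t_0)$ and positive on $(t_0,1)$. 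The remaining task is to establish
\[
\mathrm{Re}\int_0^1 t^{1/\alpha-1}(A+Bt^{m})\Big(\frac{h(tz)}{tz}-\frac1{(1+t)^2}\Big)dt\ge 0
\]
for the extremal family $h(z)=\frac{z(1+\frac{\epsilon-1}{2}z)}{(1-z)^2}$, $|\epsilon|=1$.

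The hard part will be this final sign analysis, since $p$ changes sign and $p(1)\neq 0$, so none of the single monotonicity criteria of the type in Theorems~\ref{abeer-W-class-dec-S*} and \ref{Mahnaz-W-class-dec-C} is available. I anticipate splitting the integral at $t_0$ and balancing the negative contribution on $(0,t_0)$ against the positive one on $(t_0,1)$, exploiting that the weight $t^{1/\alpha-1}$ is increasing (here $\alpha<1$, forced by $\xi\ge\alpha$ and $\xi<1$) and that $m>2$. The facts $m>2$ and $A\le 0$, together with the normalisation of $\beta$ built into \eqref{W-class-beta-cond}, are the levers I would use to close the estimate and conclude $V_\lambda(f)\in M(\xi)$.
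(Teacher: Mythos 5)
Your setup coincides with the paper's: after the integration by parts you arrive at exactly the same kernel weight $p(t)=\bigl(1-\tfrac{\xi}{\alpha}\bigr)\Lambda_\alpha(t)+\xi t^{1-1/\alpha}\lambda(t)$, and your explicit computation $p(t)=(1+c)(A+Bt^{m})$ with $m=c+1-1/\alpha$, $A=\tfrac{1-\xi/\alpha}{m}\le 0$ for $\xi\ge\alpha$, is correct. But the proposal stops precisely where the theorem begins. The announced plan --- ``split the integral at $t_0$ and balance the negative contribution on $(0,t_0)$ against the positive one on $(t_0,1)$'' --- is never executed: no estimate is proved for ${\rm Re\,}\bigl(\tfrac{h(tz)}{tz}\bigr)-\tfrac{1}{(1+t)^2}$ (a quantity depending on $z$ and on $\epsilon$), no comparison of the two pieces is carried out, and the hypotheses $c>1+1/\alpha$ and $\xi\ge\alpha$ are never actually used to close an inequality. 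As written this is a strategy, not a proof; the entire analytic content of the statement is deferred to a step you acknowledge is ``the hard part.''

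For comparison, the paper never leaves the monotonicity framework. It forms $L(t)=p(t)+\tfrac{1-t^2}{2t}p'(t)$, so that $L\le0$ on $(0,1)$ is equivalent to $p(t)/(1-t^2)$ being decreasing, and argues that $L(0^+)\le 0$ --- here $c>1+1/\alpha$ makes the exponent $c-1-1/\alpha$ positive so the singular term vanishes at the origin, and $\xi\ge\alpha$ makes the surviving constant $(1-\xi/\alpha)\tfrac{c+1}{m}$ non-positive --- and that $L'\le 0$, which for $\lambda(t)=(1+c)t^c$ reduces to one algebraic inequality in $c,\alpha,\xi$ (this is the content of \eqref{W-gamma0-J0} and the two displays following it). Note, however, that your own observation that $p(1)=\xi(1+c)\neq 0$ is in real tension with that route: $L(1)=p(1)>0$, so $L$ cannot be simultaneously $\le 0$ at $t=0^+$, decreasing, and positive at $t=1$, and indeed $p(t)/(1-t^2)\to+\infty$ as $t\to 1^-$, so the sufficient ``decreasing'' criterion is not available near $t=1$. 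This means the direct dual-integral estimate you sketch is not an optional alternative but apparently the only viable path --- which makes actually carrying it out, rather than anticipating it, the essential missing step of your proposal.
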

\begin{proof}
Note that for the Bernardi operator, the case $\xi=0$ is considered in \cite[Theorem. 5.1]{Abeer S*}
 and $\xi=1$ is considered in \cite[Theorem. 5.1]{Mahnaz C}.
We consider only the case $0<\xi<1$.\\
For the case $\gamma=0$, the integral operator $V_\lambda(W_\beta(\alpha, \, \gamma))$ is in $M(\xi)$ if and only if
\begin{align*}
\dfrac{\left(1-\dfrac{\xi}{\alpha}\right)\Pi_{0, \,\alpha}(t)+\xi t^{1-1/\alpha}\lambda(t)}{(1-t^2)}
\quad \quad {\mbox{is decreasing on}} \quad (0,1),
\end{align*}
which can be easily obtained as in Theorem \ref{A.S-result-inc}.\\
Let $J(t)=\dfrac{p(t)}{(1-t^2)}$, where
\begin{align*}
p(t)=\left(1-\dfrac{\xi}{\alpha}\right)\Pi_{0, \,\alpha}(t)+\xi t^{1-1/\alpha}\lambda(t)
\end{align*}
for the integral operator to be in $M(\xi)$, $J^\prime(t)\leq 0$ \,
$\Longrightarrow p(t)+\dfrac{(1-t^2)}{2t}p^\prime(t)\leq 0$ \\
Substituting the value of $p(t)$ and $p^\prime(t)$ in the above
equation, we have
\newline
$
J(t)
$
\begin{align}\label{initial-inequality-W-gamma0}
=\left(1-\dfrac{\xi}{\alpha}\right)\Pi_{0, \,\alpha}(t)+\dfrac{(\xi+1)}{2}t^{1-1/\alpha}\lambda(t)+
\dfrac{(\xi-1)}{2}t^{-1-1/\alpha}\lambda(t)+\dfrac{\xi}{2}t^{-1/\alpha}(1-t^2)\lambda^\prime(t)\leq 0
\end{align}
Consider the case when $\lambda(t)=(c+1)t^c$, where $c>-1$.
So the inequality \eqref{initial-inequality-W-gamma0} on simplification reduces to
\begin{align}\label{W-gamma0-J0}\nonumber
J(t)=& \left(1-\dfrac{\xi}{\alpha}\right)\dfrac{c+1}{c-\dfrac{1}{\alpha}+1}\left[1-t^{c-1/\alpha+1}\right]
+\dfrac{1}{2}(c+1)(\xi+1-c\xi)t^{c+1-1/\alpha}\\
& +\dfrac{1}{2}(c+1)(\xi-1+c\xi)t^{c-1-1/\alpha}\leq 0.
\end{align}
If $c>1+\dfrac{1}{\alpha}$ and $\alpha\leq\xi$, then for $t=0$, \eqref{W-gamma0-J0} is satisfied.\\
For $c>1+\dfrac{1}{\alpha}$, $J(0)\leq 0$ which clearly implies that if $J(t)$
is decreasing on $t\in (0,1)$ i.e., $J^\prime(t)\leq 0$, then also the inequality \eqref{W-gamma0-J0} holds.
Further simplifying \eqref{initial-inequality-W-gamma0}, for Bernardi operator, we need to show that
\begin{align}\label{reduced-W-Bernardi-gamma0(1)}
\dfrac{1}{2}(1-\xi)\left(1+\dfrac{1}{\alpha}\right)\lambda(t)+\dfrac{1}{2}\left[\xi\left(1-\dfrac{1}{\alpha}\right)-1\right]t\lambda^\prime(t)+
\dfrac{\xi}{2}t^2\lambda^{\prime\prime}(t)\leq 0.
\end{align}
On substituting the values of $\lambda(t)$, $\lambda^\prime(t)$ and $\lambda^{\prime\prime}(t)$ in \eqref{reduced-W-Bernardi-gamma0(1)},
we get
\begin{align}\label{reduced-W-Bernardi-gamma0(2)}
\left(1-c-\dfrac{1}{\alpha}\right)-\xi\left(1-c^2-\dfrac{1}{\alpha}+\dfrac{c}{\alpha}\right)\leq 0.
\end{align}
To satisfy \eqref{reduced-W-Bernardi-gamma0(2)}, $\xi$ should hold
\begin{align*}
\xi\geq\dfrac{\alpha(1-c)-1}{(c-1)+\alpha(1-c^2)}.
\end{align*}
Since $\alpha\leq\xi$, we get
\begin{align*}
\xi\geq \max\left(\alpha, \,\dfrac{\alpha(1-c)-1}{(c-1)+\alpha(1-c^2)}\right)=\alpha
\end{align*}
satisfying the hypothesis of theorem which gives $V_{\lambda}(f)(z)$ given by \eqref{defn-operator-Bernardi}
is in $M(\xi)$.
\end{proof}

\begin{theorem}
Let $0\leq\xi\leq 1$ and $\nu\geq\mu\geq 1$. If $F\in \mathcal{A}$ satisfies,
\begin{align*}
{\rm Re \,}(F^\prime(z)+\alpha zF^{\prime\prime}(z)+\gamma z^2 F^{\prime\prime\prime}(z))>\beta
\end{align*}
in $\mathbb{D}$, and $\beta<1$ satisfies
\begin{align*}
\dfrac{\beta}{(1-\beta)}=-\int_0^1 \lambda(t)[(1-\xi)g(t)+\xi(2 q(t)-1)]dt,
\end{align*}
where $g(t)$ and $q(t)$ are given by \eqref{series-g(t)} and \eqref{series-q(t)} respectively.
Then $F\in M(\xi)$.
\end{theorem}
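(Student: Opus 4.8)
The plan is to reduce this statement to Theorem \ref{A.S-result-inc} by exhibiting $F$ as an integral transform $V_\lambda$ of a function lying in $W_\beta(\alpha,\gamma)$. The key observation is that the third-order differential expression in the hypothesis is exactly the $W_\beta(\alpha,\gamma)$-operator evaluated at $h(z):=zF'(z)$. Indeed, writing $h/z=F'$, $h'=F'+zF''$ and $h''=2F''+zF'''$, a direct substitution gives
\begin{align*}
(1-\alpha+2\gamma)\frac{h(z)}{z}+(\alpha-2\gamma)h'(z)+\gamma z h''(z)=F'(z)+\alpha z F''(z)+\gamma z^2 F'''(z),
\end{align*}
since the coefficients of $F'$, $zF''$ and $z^2F'''$ collapse to $1$, $\alpha$ and $\gamma$ respectively. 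As $F\in\mathcal A$, one checks $h(0)=0$ and $h'(0)=1$, so $h\in\mathcal A$; and the hypothesis ${\rm Re\,}(F'+\alpha zF''+\gamma z^2F''')>\beta$ becomes precisely the defining inequality of $W_\beta(\alpha,\gamma)$ with the choice $\phi=0$. Hence $h\in W_\beta(\alpha,\gamma)$.

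Next I would recover $F$ from $h$ through the operator \eqref{eq-lambda-operator} with the constant weight $\lambda\equiv 1$ (which satisfies $\int_0^1\lambda(t)\,dt=1$). With $f=h=zF'$ one has $h(tz)/t=zF'(tz)$, so
\begin{align*}
V_\lambda(h)(z)=\int_0^1 \frac{h(tz)}{t}\,dt=\int_0^1 zF'(tz)\,dt=\big[F(tz)\big]_{t=0}^{t=1}=F(z).
\end{align*}
Thus $F=V_\lambda(h)$ with $\lambda\equiv 1$, and the value of $\beta$ prescribed in the statement is exactly condition \eqref{W-class-beta-cond} for this weight, with $g$ and $q$ given by \eqref{series-g(t)} and \eqref{series-q(t)}.

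It then remains to invoke the sufficient condition of Theorem \ref{A.S-result-inc}. Rather than checking the monotonicity of \eqref{A.S-res-inc-main-eq} directly, I would use the reduced inequalities \eqref{final-cond-W-class} and \eqref{additional-cond-W-class} derived at the start of Section \ref{sec-W-pascu-application}. For $\lambda\equiv 1$ we have $\lambda'\equiv\lambda''\equiv 0$, so \eqref{final-cond-W-class} becomes $(1-\xi)(1+1/\mu)\ge 0$, which holds because $0\le\xi\le 1$ and $\mu\ge 1$; and the auxiliary inequality \eqref{additional-cond-W-class} reads $0\le 1+\xi(1+1/\mu-1/\nu)$, which holds since $\nu\ge\mu$ forces $1+1/\mu-1/\nu\ge 1>0$. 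With both conditions satisfied, Theorem \ref{A.S-result-inc} yields $V_\lambda(W_\beta(\alpha,\gamma))\in M(\xi)$, and in particular $F=V_\lambda(h)\in M(\xi)$.

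The conceptual step --- spotting that $h=zF'$ turns the differential inequality into membership in $W_\beta(\alpha,\gamma)$, and that the constant weight $\lambda\equiv 1$ inverts the map $F\mapsto zF'$ under $V_\lambda$ --- is the crux; once it is in place, the hypotheses of Theorem \ref{A.S-result-inc} are verified by an essentially trivial computation, since all derivatives of the constant weight vanish. I expect no genuine obstacle beyond confirming the operator identity and that the prescribed $\beta$ coincides with \eqref{W-class-beta-cond} for $\lambda\equiv 1$.
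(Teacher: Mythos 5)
Your proposal is correct and follows essentially the same route as the paper: both set $f=zF'\in W_\beta(\alpha,\gamma)$, recognize $F=V_\lambda(f)$ with $\lambda\equiv 1$, and conclude via the sufficient conditions of Section 3 --- the paper by citing Theorem \ref{application1} (the Bernardi case with $c=0$, whose proof checks exactly \eqref{additional-cond-W-class} and \eqref{final-cond-W-class}), you by checking those two inequalities directly for the constant weight. The only difference is that you spell out the operator identity and the verification that the paper leaves implicit.
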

\begin{proof}
Let $f(z)=zF^\prime(z)$, then $f\in W_\beta(\alpha, \,\gamma)$. Therefore
\begin{align*}
F(z)=\int_0^1\dfrac{f(tz)}{t}dt.
\end{align*}
When $c=0$, the hypothesis $\nu\geq\mu\geq 1$ satisfies \eqref{application-1-cond} and hence from Theorem \ref{application1},
the required result follows.
\end{proof}

\begin{example}
If $\gamma=1$, $\alpha=3$, then $\mu=1=\nu$. In this case, \eqref{series-g(t)} and \eqref{series-q(t)}
yield
\begin{align*}
\dfrac{\beta}{(1-\beta)}&=2(1-\xi)\int_0^{-1}\dfrac{\log(1-t)}{t}dt-2\xi\log 2+1\\
&=1-2(1-\xi)\left(\dfrac{\pi^2}{12}\right)-2\xi\log 2.
\end{align*}
Thus
${\rm{Re \,}}\left(f^\prime(z)+3zf^{\prime\prime}(z)+z^2f^{\prime\prime\prime}(z)\right)>\beta\Longrightarrow f\in M(\xi)$.
\end{example}
\begin{remark}
\begin{enumerate}\item[]
\item for $\xi=0$ \cite[Remark 5.2]{Abeer S*}, we get $\beta=-1.816378$, such that $f\in M(0)$.
\item For $\xi=1$ \cite[Example 5.2]{Mahnaz C}, we get $\beta=-0.629445$, such that $f\in M(1)$.
\end{enumerate}
\end{remark}

\begin{theorem}\label{application2}
Let $0\leq \xi\leq 1$, $\mu\geq 1$, $B<1$ and $\beta<1$ satisfies \eqref{beta-Mahnaz-W-class-C}. If $f(z)\in W_{\beta}(\alpha,\gamma)$, then the function
\begin{align*}
V_\lambda(f)(z)=k\int_0^1 t^{B-1}(1-t)^{C-A-B}\phi(1-t)\dfrac{f(tz)}{t}dt
\end{align*}
belongs to $M(\xi)$ if
\begin{align*}
B<min\left(\left[2+\dfrac{1}{\mu}\right], (C-A-1)\right).
\end{align*}
\end{theorem}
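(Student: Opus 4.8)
The plan is to follow the template of Theorem~\ref{application1}: read off the weight $\lambda(t)$ from the given transform and then verify the two reduced admissibility conditions \eqref{final-cond-W-class} and \eqref{additional-cond-W-class} established at the start of Section~\ref{sec-W-pascu-application}. Comparing the displayed operator with the normalization \eqref{eq-lambda-operator}, I would take
\[
\lambda(t)=k\,t^{B-1}(1-t)^{C-A-B}\phi(1-t),
\]
where $\phi$ is the Gauss hypergeometric factor appearing in the kernel and $k$ is the constant fixed by $\int_0^1\lambda(t)\,dt=1$. First I would record the endpoint behaviour at $t=1$: the hypothesis $B<C-A-1$ gives $C-A-B>1$, so both $(1-t)^{C-A-B}$ and its first derivative vanish at $t=1$, whence $\lambda(1)=\lambda'(1)=0$. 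By the observation following \eqref{additional-cond-W-class}, this makes \eqref{additional-cond-W-class} unnecessary, so it suffices to establish \eqref{final-cond-W-class}, and it also guarantees that $\lambda$ is integrable and positive on $(0,1)$ as required by Theorem~\ref{A.S-result-inc}.

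Next I would substitute $\lambda$, $\lambda'$ and $\lambda''$ into \eqref{final-cond-W-class}. Setting $s=1-t$ and using the linear second-order differential equation satisfied by $\phi(s)$, I would eliminate $\phi''(1-t)$ in favour of $\phi'(1-t)$ and $\phi(1-t)$. After collecting terms, the left-hand side of \eqref{final-cond-W-class} should factor as $t^{B-1}(1-t)^{C-A-B-1}$ times a combination of $\phi(1-t)$ and $\phi'(1-t)$ whose coefficients are affine in $A,B,C,\mu$ and in $\xi$. Provided the parameter ranges keep $\phi$ and $\phi'$ of one sign on $(0,1)$, the whole inequality reduces to a sign check on these coefficients, handled separately for the $(1-\xi)$ and $\xi$ parts so that it holds uniformly for $\xi\in[0,1]$.

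The decisive step is that final sign check, where the bound $B<2+\tfrac1\mu$ enters: I expect the leading coefficient to carry a factor of the form $\bigl(2+\tfrac1\mu-B\bigr)$, so that $B<2+\tfrac1\mu$ forces the required non-negativity, while $B<C-A-1$ controls the $(1-t)^{C-A-B}$-weighted remainder. Once \eqref{final-cond-W-class} is confirmed, the reduction $tp''-p'\ge0$ of \eqref{initial-cond-W-class} applies and Theorem~\ref{A.S-result-inc}, together with Theorem~\ref{thm-W-Pascu-dual-equiv} and the hypothesis on $\beta$ via \eqref{W-class-beta-cond}, yields $V_\lambda(f)\in M(\xi)$. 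The main obstacle is the bookkeeping in the hypergeometric reduction: keeping the $(1-\xi)$ and $\xi$ contributions separate and confirming the requisite fixed sign of $\phi$ and $\phi'$ from the parameter hypotheses, so that the two scalar bounds on $B$ suffice.
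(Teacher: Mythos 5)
Your proposal follows the paper's proof essentially step for step: the same $\lambda(t)$, the observation that $C-A-B>1$ forces $\lambda(1)=\lambda'(1)=0$ so that \eqref{additional-cond-W-class} is moot and only \eqref{final-cond-W-class} needs checking, and a substitution of $\lambda$, $\lambda'$, $\lambda''$ that reduces the inequality to sign checks on the coefficients of $\phi(1-t)$, $\phi'(1-t)$ and $\phi''(1-t)$ in the $(1-\xi)$ and $\xi$ parts separately, with $B<2+\frac{1}{\mu}$ entering exactly where you predicted (the paper's coefficient $X_1$ carries the factor $2+\frac{1}{\mu}-B$) and $B<C-A-1$ controlling the remaining terms. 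The one divergence is that the paper does not use the hypergeometric differential equation to eliminate $\phi''$: it keeps the $\phi''(1-t)$ term, whose coefficient is the manifestly non-negative $t^2(1-t)^2$, and invokes $\phi''(1-t)>0$ directly, which is simpler than the elimination you propose and avoids the extra bookkeeping you were worried about.
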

\begin{proof}
For $V_\lambda(f)(z)$ to be in $M(\xi)$, $\lambda(t)$ should satisfy \eqref{initial-cond-W-class}, where \\
$\lambda(t)=kt^{B-1}(1-t)^{C-A-B}\phi(1-t)$ and \\
$
\displaystyle
\lambda^\prime(t)
$
\begin{align*}
=Kt^{B-2}(1-t)^{C-A-B-1}\left[\left((B-1)(1-t)-(C-A-B)t\right)\phi(1-t)-t(1-t)\phi'(1-t)\right],
\end{align*}
Since $C-A-B>1$, $\lambda(1)=0$ and $\lambda^\prime(1)=0$, generates that it is enough to check
(\ref{final-cond-W-class}).
If we substitute the values of $\lambda(t)$, $\lambda^\prime(t)$ and $\lambda^{\prime\prime}(t)$  in
$\eqref{final-cond-W-class}$, where
\begin{align*}
\intertext{and}
\lambda^{\prime\prime}(t)=&Kt^{B-3}(1-t)^{C-A-B-2}[((B-1)(B-2)(1-t)^2\\
&-2(B-1)(C-A-B)t(1-t)+(C-A-B)(C-A-B-1)t^2)\phi(1-t)\\
&+[2(C-A-B)t-2(B-1)(1-t)]t(1-t)\phi'(1-t)+t^2(1-t)^2\phi"(1-t)].
\end{align*}
a simple computation shows that \eqref{final-cond-W-class} is equivalent to
\begin{align*}
(1-\xi)h_1(t)+\xi h_2(t)\geq 0,
\end{align*}
where
\begin{align*}
h_1(t)=k t^{B-1}(1-t)^{C-A-B-1}(X_1 (t)\phi(1-t)-t(1-t)\phi^\prime(1-t))
\end{align*}
with
\begin{align*}
X_1 (t)=\left[\left(1+\dfrac{1}{\mu}\right)(1-t)-(B-1)(1-t)+(C-A-B)t\right]
\end{align*}
and
\begin{align*}
h_2(t)=k t^{B-1}(1-t)^{C-A-B-2}(X_2 (t)\phi(1-t)+X_3 (t)\phi^\prime(1-t)+X_4 (t)\phi^{\prime\prime}(1-t))
\end{align*}
with
\begin{align*}
 X_2 (t)=&(B-1)(1-t)^2\left[B-2-\dfrac{1}{\mu}\right]+(C-A-B)t(1-t)\left[\dfrac{1}{\mu}-2(B-1)\right] \\
&+(C-A-B)(C-A-B-1)t^2,
\end{align*}
$\hspace{1cm} X_3 (t)=[2(C-A-B)t-2(B-1)(1-t)]t(1-t)+\dfrac{1}{\mu}t(1-t)^2$,\\
and
$X_4 (t)=t^2(1-t)^2.$

Since $\phi(1-t)>0$, $\phi^\prime(1-t)>0$ and $\phi^{\prime\prime}(1-t)>0$
(see also \cite{Abeer S*,Mahnaz C,rag M}), proving $X_i (t)>0$ for $i=1,2,3$ gives
$h_1(t)\geq 0$ and $h_2(t)\geq 0$ which will imply the required result for $0 < t < 1$.

Now $ X_1 (t)=\left[(C-A-B)t+\left(1+\dfrac{1}{\mu}-(B-1)\right)(1-t)\right]>0$, which clearly holds since
$(C-A-B)>1$ and $B<2+\dfrac{1}{\mu}$ for all $t\in(0,1)$.

Similarly to prove $X_2(t)>0$, it is enough to prove
\begin{align*}
(B-1)\left(B-2-\dfrac{1}{\mu}\right)(1-t)+(C-A-B)\left(\dfrac{1}{\mu}-2(B-1)\right)t>0,
\end{align*}
as the other term in $X_2(t)$ is positive on $0<t<1$. Since
$B<1$ and $\dfrac{1}{\mu}>(B-2)$ from the hypothesis, the term involving $(1-t)$ is non-negative.
Given that $\dfrac{1}{\mu}>(B-2)$, since $B<1$ we have $\dfrac{1}{\mu}>2(B-1)$ and $(C-A-B)>1$, which gives
$X_2 (t)>0$ for $0<t<1$.

Now proving $X_3(t)>0$ is equivalent to prove
$2(C-A-B)t+\left[\dfrac{1}{\mu}-2(B-1)\right](1-t)>0$.\\
Since $2(C-A-B)t+\left[\dfrac{1}{\mu}-2(B-1)\right](1-t)>0$,
by hypothesis, and $\dfrac{1}{\mu}t(1-t)^2>0$ for $\mu>0$ and $0<t<1$, $X_3(t)>0$.
\end{proof}
\begin{remark}
\begin{enumerate}\item[]
\item For the particular value of $\xi=0$, Theorem \ref{application2} yields a
result with a smaller range for the parameters than the result given in \cite[Theorem.5.5]{Abeer S*}.
\item For the case $\xi=1$, Theorem \ref{application2} results coincides with the result given in \cite[Theorem.5.8]{Mahnaz C}.
\end{enumerate}
\end{remark}
\begin{theorem}
Let $0\leq \xi\leq 1$, $a>-1$, $b>-1$ and $\beta<1$ satisfies \rm(\ref{beta-Mahnaz-W-class-C}). If $f(z)\in W_{\beta}(\alpha,\gamma)$, then the function
\begin{align*}
V_\lambda(f)(z)=\int_0^1 \lambda(t) \dfrac{f(tz)}{t}dt
\end{align*}
where $\lambda(t)$ is given by
\begin{align*}
\lambda(t)= \begin{cases}
(a+1)(b+1)\frac{t^a(1-t^{b-a})}{b-a},
 &b\neq a,\\
(a+1)^2t^a\log(1/t),  &  b=a.
\end{cases}
\end{align*}
belongs to $M(\xi)$ if $a$, $b$ and $\mu$ satisfies one of the following conditions:
\begin{enumerate}
\item[{\rm{(i)}}] $b>a$, \quad $-1<a<0$ and \quad $b+a-1 <\dfrac{1}{\mu} < b-1$,
\item[{\rm{(ii)}}] $b<a$, \quad $-1<b<0$ and \quad $b+a-1 <\dfrac{1}{\mu} < a-1$,
\item[{\rm{(iii)}}] $b=a<0$  \quad and \quad $\dfrac{1}{\mu} > b-1$, which for $\mu\geq1$
(as in Theorem \ref{A.S-result-inc}) gives $b<2 \Rightarrow b<\min\{0,2\}=0$.
\end{enumerate}
\end{theorem}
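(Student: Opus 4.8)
The plan is to invoke Theorem \ref{A.S-result-inc} through the simplified sufficient conditions set up at the start of Section \ref{sec-W-pascu-application}: it suffices to verify the differential inequality \eqref{final-cond-W-class} together with the boundary requirement \eqref{additional-cond-W-class}, with $\beta$ fixed by the sharpness relation \eqref{W-class-beta-cond} and with $\mu\geq 1$ in force, exactly as in Theorem \ref{A.S-result-inc} (this is why case (iii) carries the parenthetical reference to that theorem). The standing hypotheses of Theorem \ref{A.S-result-inc} --- integrability and positivity of $\Lambda_\nu$ and $\Pi_{\mu,\nu}$ together with the vanishing limits as $t\to 0^+$ --- are verified by the routine estimates of \cite{Abeer S*,Mahnaz C}, so I would dispose of them first. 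I would then record that, for $b\neq a$,
\begin{align*}
\lambda(t)=\frac{(a+1)(b+1)}{b-a}\,(t^{a}-t^{b}),
\end{align*}
an expression symmetric in $a$ and $b$; the logarithmic branch $b=a$ is symmetric as well. Hence the substitution $a\leftrightarrow b$ carries (i) into (ii), and it is enough to treat (i) (with $b>a$) and the degenerate case (iii).

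Next I would settle the boundary condition, which is the first point of deviation from the generic template. A direct differentiation gives $\lambda(1)=0$ but $\lambda'(1)=-(a+1)(b+1)\neq 0$, so the remark after \eqref{additional-cond-W-class} (valid only when $\lambda(1)=\lambda'(1)=0$) does not apply. Tracing the reduction from \eqref{initial-cond-W-class} to \eqref{final-cond-W-class}, the quantity that must be non-negative is the boundary value $B\lambda(1)-\xi\lambda'(1)$, where $B=1+\xi(1+1/\mu-1/\nu)$; this is precisely the cleared-denominator form of \eqref{additional-cond-W-class}. Since $\lambda(1)=0$ it reduces to $-\xi\lambda'(1)=\xi(a+1)(b+1)\geq 0$, which holds because $a,b>-1$ and $\xi\geq 0$. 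Thus the passage to \eqref{final-cond-W-class} is legitimate even though $\lambda'(1)\neq 0$.

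The heart of the proof is \eqref{final-cond-W-class}. For $b\neq a$ I would substitute $\lambda,\lambda',\lambda''$ and collect the coefficients of $t^{a}$ and $t^{b}$, reducing the left-hand side to
\begin{align*}
\frac{(a+1)(b+1)}{b-a}\bigl(P(a)\,t^{a}-P(b)\,t^{b}\bigr),\qquad P(x)=\xi x^{2}-\Bigl(1+\frac{\xi}{\mu}\Bigr)x+(1-\xi)\Bigl(1+\frac{1}{\mu}\Bigr).
\end{align*}
The decisive observation is the factorisation $P(x)=\xi\bigl(x-(1+1/\mu)\bigr)\bigl(x-\tfrac{1-\xi}{\xi}\bigr)$ for $\xi>0$ (with $P(x)=1+1/\mu-x$ when $\xi=0$). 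In case (i) the prefactor is positive, so I must show $P(a)\geq P(b)\,t^{b-a}$ on $(0,1)$. Since $-1<a<0$, both factors of $P(a)$ are negative and $P(a)>0$; and from
\begin{align*}
P(a)-P(b)=(a-b)\Bigl[\xi(a+b)-1-\frac{\xi}{\mu}\Bigr]
\end{align*}
together with the hypothesis $a+b-1/\mu<1$ (whence $\xi(a+b-1/\mu)\leq 1$ for $0\leq\xi\leq 1$) one gets $P(a)\geq P(b)$. If $P(b)\leq 0$ the inequality is immediate since $t^{b-a}>0$; if $P(b)>0$ then $b>a$ forces $t^{b-a}<1$, so $P(b)t^{b-a}<P(b)\leq P(a)$. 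Either way \eqref{final-cond-W-class} holds, and (ii) follows by the symmetry noted above.

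Finally, for (iii) I would take $b=a<0$ and $\lambda(t)=-(a+1)^{2}t^{a}\log t$. Substituting into \eqref{final-cond-W-class} and factoring out $-(a+1)^{2}t^{a}<0$, the inequality becomes
\begin{align*}
P(a)\log t+Q\leq 0,\qquad Q=-1+\xi\Bigl(2a-\frac{1}{\mu}\Bigr),
\end{align*}
with the same $P$. Here $P(a)\geq 0$ exactly as above, so $P(a)\log t\leq 0$ on $(0,1)$, while $a<0$ and $\xi\geq 0$ give $Q\leq -1<0$; hence the left-hand side is strictly negative throughout $(0,1)$, proving (iii). I expect the main obstacle to be organising the sign analysis of the quadratic $P$ at the two exponents under the interval hypotheses on $1/\mu$, compounded by the two genuine departures from the standard pattern: the non-vanishing boundary derivative $\lambda'(1)$, where the usual simplification fails and the boundary term must be examined by hand, and the logarithmic degeneration at $b=a$. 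The factorisation of $P$ is what ultimately makes all three cases transparent.
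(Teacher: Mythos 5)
Your proposal is correct and follows the same overall strategy as the paper --- reduce to the sufficient conditions \eqref{final-cond-W-class} and \eqref{additional-cond-W-class} set up at the start of Section \ref{sec-W-pascu-application} and then substitute the specific $\lambda$ --- but the way you dispatch the key inequality is genuinely different and, in places, sharper. The paper writes the left side of \eqref{final-cond-W-class} as $(1-\xi)A(t)+\xi B(t)$ and proves $A(t)>0$ and $B(t)>0$ separately, which is why both endpoints of the hypothesis $b+a-1<1/\mu<b-1$ get used (the upper bound $b>1+1/\mu$ is what makes the $t^{b}$-term of $A$ harmless). You instead collect the coefficients of $t^{a}$ and $t^{b}$ into the single quadratic $P(x)=\xi x^{2}-(1+\xi/\mu)x+(1-\xi)(1+1/\mu)$, whose factorization $\xi\left(x-(1+1/\mu)\right)\left(x-\tfrac{1-\xi}{\xi}\right)$ makes the signs transparent; your argument needs only $-1<a<0$ and $a+b-1<1/\mu$, so it establishes the conclusion under formally weaker hypotheses, which a fortiori gives the stated theorem. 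You are also more careful than the paper on two points it glosses over: condition \eqref{additional-cond-W-class} is literally meaningless here because $\lambda(1)=0$ while $\lambda'(1)=-(a+1)(b+1)\neq 0$, and your return to the cleared-denominator boundary value $\left[1+\xi(1+1/\mu-1/\nu)\right]\lambda(1)-\xi\lambda'(1)=\xi(a+1)(b+1)\geq 0$ is exactly the right repair (the paper only asserts ``$\lambda'(t)\leq 0$''); and your $a\leftrightarrow b$ symmetry reduction of (ii) to (i) replaces the paper's verbatim repetition of the argument with the roles of $a$ and $b$ exchanged. Your case (iii), via $P(a)\log t+Q\leq 0$ with $Q=-1+\xi(2a-1/\mu)\leq -1$, is the combined form of the paper's check that the $(1-\xi)$- and $\xi$-parts are separately non-negative. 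No gaps.
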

\begin{proof}
To prove the required result we need to show that \eqref{initial-cond-W-class} holds.\\
For the case $a\neq b$, $\lambda(1)=0$, $\Lambda_\nu(1)=0$ and $\lambda^\prime(t)\leq 0$.
So it is enough to show the inequality \eqref{final-cond-W-class}. On substituting the values for
$\lambda(t)$, $\lambda^\prime(t)$ and $\lambda^{\prime\prime}(t)$, we need to show that $(1-\xi)A(t)+\xi B(t)\geq 0$, where
\begin{align*}
A(t)=\dfrac{(a+1)(b+1)}{(b-a)}\left(\left[\left(1+\dfrac{1}{\mu}\right)-a\right]t^a-\left[\left(1+\dfrac{1}{\mu}\right)-b\right]t^b\right)
\end{align*}
and
\begin{align*}
B(t)=\dfrac{(a+1)(b+1)}{(b-a)}\left(\left[a(a-1)-\dfrac{1}{\mu}a\right]t^a-\left[b(b-1)-\dfrac{1}{\mu}b\right]t^b\right).
\end{align*}
So it is enough to prove $A(t)>0$ and $B(t)>0$ for $0<t<1$.

Case(i) $a<b$: Since $a>-1$ and $b>-1$, so
$\dfrac{(a+1)(b+1)}{(b-a)}>0$. we need only to show that
$\displaystyle\left[\left(1+\dfrac{1}{\mu}\right)-a\right]t^a-\left[\left(1+\dfrac{1}{\mu}\right)-b\right]t^b>0$,
which clearly holds since $b>\left(1+\dfrac{1}{\mu}\right)$. Hence $A(t)>0$ for all $t\in(0,1)$. Now for
$B(t)$ to be positive, it is enough to show that
$\displaystyle\left(\left[a(a-1)-\dfrac{1}{\mu}a\right]t^a-\left[b(b-1)-\dfrac{1}{\mu}b\right]t^b\right)>0$ which
is satisfied by the given condition on $b$.

Case(ii) $b<a$: Since $a>-1$ and $b>-1$, so
$\dfrac{(a+1)(b+1)}{(b-a)}<0$. We need only to show that
$\displaystyle\left[\left(1+\dfrac{1}{\mu}\right)-a\right]t^a-\left[\left(1+\dfrac{1}{\mu}\right)-b\right]t^b<0$
which is true since $a>\left(1+\dfrac{1}{\mu}\right)$. Now for
$B(t)$ to be positive, it is enough to show that
$$
\displaystyle\left(\left[a(a-1)-\dfrac{1}{\mu}a\right]t^a-\left[b(b-1)-\dfrac{1}{\mu}b\right]t^b\right)<0
$$
which
is satisfied by the given condition on $a$.

Case(iii) $a=b\leq 0$: Changing inequality \eqref{final-cond-W-class}, which is true for $a=0$. Hence we only consider the situation
$a< 0 $. Substituting the values of $\lambda(t)$, $\lambda'(t)$ and $\lambda''(t)$ in
\eqref{final-cond-W-class}, an easy computation shows that for $0\leq \xi \leq 1$, it suffices to show that
the expressions
\begin{align*}
\log\left(\dfrac{1}{t}\right)\left(1+\dfrac{1}{\mu}-a\right)+1  \quad {\mbox{and}} \quad
\left(a(a-1)-\dfrac{a}{\mu}\right)\log\left(\dfrac{1}{t}\right)+\left(\dfrac{1}{\mu}-2a+1\right)
\end{align*}
are non-negative. Since $\displaystyle \log\left(\dfrac{1}{t}\right)$ is positive,
the non-negativity of the first expression
$ \log\left(\dfrac{1}{t}\right)\left(1+\dfrac{1}{\mu}-a\right)+1  $  follows from hypothesis (iii) of the theorem.
Similar observation shows that the second expression reduces to $a(a-1)-\dfrac{a}{\mu}\geq 0$ and
$\dfrac{a}{\mu}-2a+1 \geq 0$ using $\displaystyle \log\left(\dfrac{1}{t}\right)$ is positive. These two inequalities, for $a<0$,
gives $\dfrac{1}{\mu} \geq \max \{ 2a -1, a-1\} = a-1$, which is hypothesis (iii). The proof is complete.
\end{proof}

\begin{theorem}\label{komatu-pascu-W}
Let $c<0$ and $\mu \geq 1$ with $0\leq \xi \leq 1$. Further suppose that $p>2$ and $\beta<1$ be given by
\begin{align*}
\dfrac{\beta-1/2}{1-\beta}=&-\dfrac{(1+c)^p}{\Gamma(p)}
\int\limits_0^1t^{c}\left(\log\frac{1}{t}\right)^{p-1}q(t)dt,
\end{align*}
where $q(t)$ satisfies \eqref{de-q}. Then for
the function $f(z)\in W_{\beta}(\alpha,\gamma)$,
\begin{align*}
V_{\lambda}(f)=\frac{(c+1)^p}{\Gamma(p)}\int\limits_0^1\left(\log\frac{1}{\gamma}\right)^{p-1}t^{c-1}f(tz)dt,
\end{align*}
belongs to $M(\xi)$.
\end{theorem}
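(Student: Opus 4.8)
The plan is to apply the reduction set up after Theorem~\ref{A.S-result-inc}: read off the weight $\lambda$ attached to the Komatu operator, check that $\lambda(1)=\lambda'(1)=0$ so that only inequality \eqref{final-cond-W-class} has to be verified (the boundary condition \eqref{additional-cond-W-class} being then unnecessary, by the observation following it), and finally confirm \eqref{final-cond-W-class} by a direct computation. Here
\[
\lambda(t)=\frac{(c+1)^{p}}{\Gamma(p)}\,t^{c}\Bigl(\log\tfrac1t\Bigr)^{p-1},
\]
and since $-1<c<0$ the change of variable $u=\log(1/t)$ turns $\int_0^1\lambda(t)\,dt$ into a Gamma integral, so that $\lambda\ge0$ and $\int_0^1\lambda(t)\,dt=1$, as required for the operator \eqref{eq-lambda-operator}.

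First I would record the derivatives in the variable $u=\log(1/t)$, for which $u'=-1/t$ and $u\to0^{+}$ as $t\to1^{-}$. Writing $C=(c+1)^p/\Gamma(p)$ one gets $\lambda(t)=C\,t^{c}u^{p-1}$ and $\lambda'(t)=C\,t^{c-1}u^{p-2}\bigl(cu-(p-1)\bigr)$; since $p>2$ the factors $u^{p-1}$ and $u^{p-2}$ both vanish as $u\to0^{+}$, giving $\lambda(1)=\lambda'(1)=0$. Substituting $\lambda,\lambda',\lambda''$ into \eqref{final-cond-W-class} and simplifying, the two brackets factor cleanly as
\[
\Bigl(1+\tfrac1\mu\Bigr)\lambda(t)-t\lambda'(t)=C\,t^{c}u^{p-2}\Bigl[\bigl(1+\tfrac1\mu-c\bigr)u+(p-1)\Bigr]
\]
and
\[
t^{2}\lambda''(t)-\tfrac1\mu\,t\lambda'(t)=C\,t^{c}u^{p-3}\Bigl[c\bigl(c-1-\tfrac1\mu\bigr)u^{2}+(p-1)\bigl(1+\tfrac1\mu-2c\bigr)u+(p-1)(p-2)\Bigr].
\]

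For $t\in(0,1)$ we have $u>0$, $C>0$ and $t^{c}>0$, while the hypotheses $c<0$, $\mu\ge1$ and $p>2$ force every coefficient inside the square brackets to be positive: $1+\tfrac1\mu-c>0$ and $p-1>0$ in the first line; and $c(c-1-\tfrac1\mu)>0$ (a product of two negative numbers), $(p-1)(1+\tfrac1\mu-2c)>0$, $(p-1)(p-2)>0$ in the second. Hence each bracket is a positive power of $u$ times a polynomial in $u$ with positive coefficients, so both are non-negative on $(0,1)$, and the left side of \eqref{final-cond-W-class}, being $(1-\xi)$ times the first plus $\xi$ times the second, is non-negative for every $0\le\xi\le1$. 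Theorem~\ref{A.S-result-inc} (equivalently Theorem~\ref{thm-W-Pascu-dual-equiv}), with $\beta$ fixed by the stated relation, then yields $V_\lambda(f)\in M(\xi)$. The only subtle step is the sign of the middle coefficient of the quadratic in the $\xi$-bracket; the decisive point is that the raw expression $-2c(p-1)+(p-1)+(p-1)/\mu$ collapses to $(p-1)(1+\tfrac1\mu-2c)$, after which positivity is transparent. I expect this factorisation, together with the bookkeeping in forming $\lambda''$, to be the only real computational obstacle, the rest being a routine instance of the scheme already in place in Section~\ref{sec-W-pascu-application}.
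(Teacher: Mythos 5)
Your argument is correct, and the computations check out: with $u=\log(1/t)$ one indeed gets $\bigl(1+\tfrac1\mu\bigr)\lambda(t)-t\lambda'(t)=C\,t^{c}u^{p-2}\bigl[(1+\tfrac1\mu-c)u+(p-1)\bigr]$ and $t^{2}\lambda''(t)-\tfrac1\mu t\lambda'(t)=C\,t^{c}u^{p-3}\bigl[c(c-1-\tfrac1\mu)u^{2}+(p-1)(1+\tfrac1\mu-2c)u+(p-1)(p-2)\bigr]$, and under $c<0$, $\mu\ge1$, $p>2$ every bracketed coefficient is positive, so \eqref{final-cond-W-class} holds for all $0\le\xi\le1$; since $p>2$ also forces $\lambda(1)=\lambda'(1)=0$, condition \eqref{additional-cond-W-class} is indeed dispensable by the observation preceding Theorem \ref{application1}. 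However, your route is genuinely different from the paper's. The paper does not verify \eqref{final-cond-W-class} directly: it writes $\lambda(t)=Kt^{c}(1-t)^{p-1}\phi(1-t)$ with $\phi(1-t)=\bigl(\log(1/t)/(1-t)\bigr)^{p-1}$, identifies $B=c+1$ and $C-A-B=p-1$, and invokes Theorem \ref{application2}, reducing the hypothesis there to $c<\min\{0,1+\tfrac1\mu\}=0$. The paper's reduction is shorter on the page but silently relies on the positivity of $\phi$, $\phi'$, $\phi''$ for this particular $\phi$ (cited to the references rather than checked), whereas your direct computation is self-contained, makes the role of each hypothesis ($c<0$ for the signs, $p>2$ for the boundary vanishing and for $(p-1)(p-2)>0$) completely explicit, and is no harder. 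Two small caveats you inherit from the paper rather than introduce: convergence of the Gamma integral (and your normalization $\int_0^1\lambda=1$) requires $c>-1$, which the statement omits; and the stated normalization of $\beta$ is the $\xi=1$ relation \eqref{beta-Mahnaz-W-class-C}, whereas applying Theorem \ref{A.S-result-inc} for general $\xi$ really requires \eqref{W-class-beta-cond} -- it would be worth flagging both rather than passing over them with ``$\beta$ fixed by the stated relation.''
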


\begin{proof}
Choosing
$\displaystyle
\phi(1-t)=\left(\frac{\log(1/t)}{1-t}\right)^{p-1}$ ,
we take $C-A-B=p-1$ and $B=c+1$ so that $\lambda(t)$ takes the form
\begin{align*}
\lambda(t)=Kt^{c}(1-t)^{p-1}\phi(1-t), \quad K=\frac{(1+c)^p}{\Gamma(p)}.
\end{align*}
We complete the proof by applying Theorem \eqref{application2} and using a simple computation to obtain $c< \min\{0, 1+\dfrac{1}{\mu}\} = 0$.
\end{proof}

\section{A generalized integral operator}\label{sec-W-pascu-genl-operator}
In this section for the functions $f\in W_{\beta}(\alpha,\gamma)$, we consider
another integral operator introduced in \cite{Ali} and find the admissibility conditions to be in the class $M(\xi)$.
\begin{theorem}
Let $\mu>0$ , $\nu>0$, satisfies \eqref{eq-mu+nu}, then for $\rho<1$ and $\beta <1$ satisfying
\begin{align}\label{W-beta-rho-eq}
\dfrac{1}{2(1-\beta)(1-\rho)}=\int_0^1\lambda(t)\left[(1-\xi)\left(\dfrac{1-g(t)}{2}\right)+\xi(1-q(t))\right] dt,
\end{align}
where $g(t)$ and $q(t)$ are defined by \eqref{series-g(t)} and \eqref{series-q(t)} respectively. Assume that for $f\in\mathcal{A}$,
\begin{align*}
\mathcal{V}_\lambda(f)(z)=z\int_0^1\lambda(t)\left(\dfrac{1-\rho tz}{1-tz}\right) dt \ast f(z),
\end{align*}
then $F=\mathcal{V}_\lambda(W_{\beta}(\alpha,\gamma))\subset M(\xi) \Longleftrightarrow N_{\Pi_{\mu,\,\nu}}(h)\geq 0$.
\end{theorem}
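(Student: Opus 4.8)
The plan is to follow the duality argument of Theorem~\ref{thm-W-Pascu-dual-equiv} almost step for step, the only structural change being the replacement of the generating kernel of $V_\lambda$ by that of $\mathcal{V}_\lambda$. First I would record that $F=\mathcal{V}_\lambda(f)\in M(\xi)$ is equivalent to $\xi zF'+(1-\xi)F\in S^\ast$, and, by the convolution characterisation of starlikeness \cite[p.94]{Rus}, to
\begin{equation*}
0\neq\frac1z\Big[\big((1-\xi)F+\xi zF'\big)\ast h(z)\Big]=\Big((1-\xi)\frac{F}{z}+\xi F'\Big)\ast\frac{h(z)}{z},\qquad |z|<1 .
\end{equation*}
As in Theorem~\ref{thm-W-Pascu-dual-equiv}, writing $H(z)=(1-\alpha+2\gamma)\frac{f}{z}+(\alpha-2\gamma)f'+\gamma zf''$ and $G=(H-\beta)/(1-\beta)$, the membership $f\in W_\beta(\alpha,\gamma)$ together with the duality principle \cite{Rus} lets me assume $G(z)=(1+xz)/(1+yz)$ with $|x|=|y|=1$, while \eqref{eq-mu+nu} gives $f'=H\ast\psi_{\mu,\nu}$.

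The new input is the kernel of $\mathcal{V}_\lambda$. A coefficient computation shows
\begin{equation*}
\frac{F}{z}=\Phi_\rho\ast\frac{f}{z},\qquad F'=\Phi_\rho\ast f',\qquad \Phi_\rho(z):=\int_0^1\lambda(t)\,\frac{1-\rho tz}{1-tz}\,dt ,
\end{equation*}
so that the only difference from the $V_\lambda$ case is the factor $1-\rho tz$ in the numerator. The decisive elementary identity I would use is
\begin{equation*}
\frac{1-\rho tz}{1-tz}\ast\frac{h(z)}{z}=\rho+(1-\rho)\frac{h(tz)}{tz},
\end{equation*}
which follows from $\frac{1-\rho tz}{1-tz}=1+(1-\rho)\sum_{n\ge1}(tz)^n$ and $\frac{1}{1-tz}\ast\frac{h(z)}{z}=\frac{h(tz)}{tz}$; convolving instead against $f'$ turns $\frac{h(tz)}{tz}$ into $h'(tz)$ exactly as in Theorem~\ref{thm-W-Pascu-dual-equiv}. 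Substituting these into the displayed non-vanishing condition, factoring out $\psi_{\mu,\nu}\ast\frac{1+xz}{1+yz}$, and applying Ruscheweyh's criterion \cite[p.23]{Rus} reduces the whole thing to a single inequality ${\rm Re\,}(\cdots)>\tfrac12$, whose integrand now carries the extra additive constant $\rho$ produced by the identity above together with the normalisation $\int_0^1\lambda(t)\,dt=1$.

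At this point the role of \eqref{W-beta-rho-eq} becomes transparent: it is precisely the value of $\beta$ for which the $\rho$-shifted constant terms cancel, after inserting the series \eqref{series-g(t)} and \eqref{series-q(t)} for $g$ and $q$, leaving exactly the functional $N_{\Pi_{\mu,\nu}}(h)$ of \eqref{eqn-W-pascu-N-Pi}. Concretely, \eqref{W-beta-rho-eq} reorganises into $(1-\beta)(1-\rho)=1-\beta_0$, where $\beta_0$ is the constant attached to $V_\lambda$ in Theorem~\ref{thm-W-Pascu-dual-equiv}; this is consistent with the decomposition $\mathcal{V}_\lambda(f)=(1-\rho)V_\lambda(f)+\rho z$, under which blending with the identity rescales the effective order by $1-\rho$. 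Once the constants cancel, the representations \eqref{abeer-W-class-S*-result} and \eqref{Mahnaz-W-class-C-result} furnished by Theorems~\ref{abeer-W-class-S*} and \ref{Mahnaz-W-class-C} convert the $\tfrac12$-inequality into $N_{\Pi_{\mu,\nu}}(h)\ge0$; since every reduction above is an equivalence, both implications of the stated biconditional follow at once.

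The main obstacle I anticipate is the bookkeeping in the middle step: I must carry the additive constant $\rho$ through both the $(1-\xi)$ and the $\xi$ pieces and verify that the integrated constant part collapses to exactly $\frac{1}{2(1-\beta)(1-\rho)}$, so that \eqref{W-beta-rho-eq} renders the passage to ${\rm Re\,}(\cdots)>\tfrac12$ an exact equivalence rather than merely sufficient. Placing the factor $\tfrac12$ and the $(1-\rho)$ correctly — and confirming that the surviving kernel functional is literally the $N_{\Pi_{\mu,\nu}}$ of Theorem~\ref{thm-W-Pascu-dual-equiv}, unchanged by $\rho$ — is the only genuinely delicate point; the remaining manipulations are identical to those used there.
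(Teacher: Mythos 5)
Your proposal is correct and follows exactly the route the paper intends: the paper itself gives no details here, saying only that the proof "follows similar lines" as Theorem \ref{thm-W-Pascu-dual-equiv}, and your expansion — the identity $\frac{1-\rho tz}{1-tz}\ast\frac{h(z)}{z}=\rho+(1-\rho)\frac{h(tz)}{tz}$ together with the rescaling $(1-\beta)(1-\rho)=1-\beta_0$ coming from \eqref{W-beta-rho-eq} — is precisely the bookkeeping needed to make that claim precise. No gaps.
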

\begin{proof}
The proof follows similar lines of proof of Theorem \ref{thm-W-Pascu-dual-equiv}. Hence, we omit details.
\end{proof}
\begin{note}
$\mathcal{V}_\lambda(f)(z)=\rho z+(1-\rho)V_\lambda(f)(z)$ and hence this operator generalize the operator
given in \eqref{eq-lambda-operator}.
\end{note}
For finding applications of the operator $\mathcal{V}_\lambda(f)(z)$, by virtue of \eqref{eqn-W-pascu-N-Pi},
Theorem \ref{app-W-class-gamma0} is ssufficient. This means we use the conditions given in
Section $\ref{sec-W-pascu-application}$. Hence we state the following results without giving their proof as they can be obtained
in a similar fashion as in the results of Section $\ref{sec-W-pascu-application}$.

\begin{corollary}\label{cor-W-pascu-genl-operator-bernardi}
Let $0\leq \xi\leq 1$, $\nu>1$, $\mu> 1$, satisfies \eqref{W-class-beta-cond},
Then for $\rho<1$ and $\beta<1$ satisfying
\begin{align*}
\dfrac{1}{2(1-\beta)(1-\rho)}=(c+1)\int_0^1 t^c\left[(1-\xi)\left(\dfrac{1-g(t)}{2}\right)+\xi(1-q(t))\right] dt
\end{align*}
where $g(t)$ and $q(t)$ are defined by \eqref{series-g(t)} and \eqref{series-q(t)}
 respectively. Assume that for $f\in W_{\beta}(\alpha,\gamma)$,
the function $\mathcal{V}_\lambda(f)(z)$ belongs to $M(\xi)$
provided
\begin{align*}
-1<c\leq min\left[\left(1+\dfrac{1}{\mu}-\dfrac{1}{\nu}\right), \left(\dfrac{1+\dfrac{1}{\mu}-\xi}{1+2\xi}\right)\right].
\end{align*}
\end{corollary}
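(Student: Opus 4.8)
The plan is to reduce the corollary to the Bernardi-operator result already obtained for $V_\lambda$, namely Theorem \ref{application1}, by exploiting the decomposition recorded in the Note preceding this corollary, $\mathcal{V}_\lambda(f)(z)=\rho z+(1-\rho)V_\lambda(f)(z)$. The point is that the generalized operator differs from $V_\lambda$ only by the affine term $\rho z$, and this term is absorbed entirely into the normalization of $\beta$; the structural admissibility condition is left untouched.

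First I would invoke the duality characterization for $\mathcal{V}_\lambda$ established in the preceding theorem of this section, which asserts that $\mathcal{V}_\lambda(W_\beta(\alpha,\gamma))\subset M(\xi)$ if and only if $N_{\Pi_{\mu,\nu}}(h)\geq 0$, where $N_{\Pi_{\mu,\nu}}$ is the same functional defined in \eqref{eqn-W-pascu-N-Pi}. The crucial observation is that this functional is literally the one that governs $V_\lambda$ in Theorem \ref{thm-W-Pascu-dual-equiv}; the parameter $\rho$ enters only through the normalization \eqref{W-beta-rho-eq} (specialized to the Bernardi kernel in the statement), and not through $N_{\Pi_{\mu,\nu}}(h)$ itself.

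Next I would transcribe the reduction carried out at the start of Section \ref{sec-W-pascu-application}. Since $N_{\Pi_{\mu,\nu}}(h)\geq 0$ is guaranteed by the monotonicity condition \eqref{A.S-res-inc-main-eq} of Theorem \ref{A.S-result-inc}, which in turn is implied by the two pointwise inequalities \eqref{final-cond-W-class} and \eqref{additional-cond-W-class}, it suffices to verify these for $\lambda(t)=(c+1)t^c$. Because $\Pi_{\mu,\nu}$, $\Lambda_\nu$, and the inequalities \eqref{final-cond-W-class}--\eqref{additional-cond-W-class} depend only on $\lambda$ and not on $\rho$, this entire reduction is unaffected by the passage from $V_\lambda$ to $\mathcal{V}_\lambda$. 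I would then run the computation exactly as in the proof of Theorem \ref{application1}: the hypothesis gives $\xi\,\lambda'(1)/\lambda(1)=\xi c\leq \xi(1+\tfrac{1}{\mu}-\tfrac{1}{\nu})\leq 1+\xi(1+\tfrac{1}{\mu}-\tfrac{1}{\nu})$, so \eqref{additional-cond-W-class} holds, while \eqref{final-cond-W-class} reduces to $1+\tfrac{1}{\mu}-c+c^2\xi\geq 0$, which follows from $(c+1)^2>0$ together with the bound $c\leq (1+\tfrac{1}{\mu}-\xi)/(1+2\xi)$. This yields $\mathcal{V}_\lambda(f)\in M(\xi)$.

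The part deserving genuine care---rather than any hard calculation---is the first step: one must confirm that the extra term $\rho z$ influences only the normalization of $\beta$ and leaves the admissibility functional intact. Forming $\xi zF'+(1-\xi)F$ with $F=\rho z+(1-\rho)V_\lambda(f)$ gives $\rho z+(1-\rho)\big(\xi z V_\lambda(f)'+(1-\xi)V_\lambda(f)\big)$, so the $\rho$-contribution is a pure $\rho z$ summand, which is exactly what is accounted for in the normalization \eqref{W-beta-rho-eq} inside the preceding duality theorem. Once that is secured, the remainder is a direct transcription of Section \ref{sec-W-pascu-application}, which is precisely why the authors state the corollary without a separate proof.
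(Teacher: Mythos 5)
Your proposal is correct and follows exactly the route the paper intends: the corollary is stated without proof precisely because, as you argue, the duality theorem for $\mathcal{V}_\lambda$ reduces everything to the same functional $N_{\Pi_{\mu,\nu}}(h)$, after which the verification of \eqref{additional-cond-W-class} and \eqref{final-cond-W-class} for $\lambda(t)=(c+1)t^c$ is a verbatim transcription of the proof of Theorem \ref{application1}. Your explicit check that $\xi zF'+(1-\xi)F=\rho z+(1-\rho)\bigl(\xi zV_\lambda(f)'+(1-\xi)V_\lambda(f)\bigr)$, so that $\rho$ enters only through the normalization \eqref{W-beta-rho-eq}, is the one point the paper leaves implicit, and you handle it correctly.
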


\begin{corollary}
Let $0\leq \xi\leq 1$, $\nu>1$, $\mu> 1$, satisfies \eqref{W-class-beta-cond},
Then for $\rho<1$ and $\beta<1$ satisfying
\begin{align*}
\dfrac{1}{2(1-\beta)(1-\rho)}=k\int_0^1 t^{B-1}(1-t)^{C-A-B}\phi(1-t)\left[(1-\xi)\left(\dfrac{1-g(t)}{2}\right)+\xi(1-q(t))\right] dt
\end{align*}
where $g(t)$ and $q(t)$ are defined by \eqref{series-g(t)} and \eqref{series-q(t)}
 respectively. Assume that for $f\in W_{\beta}(\alpha,\gamma)$,
the function $\mathcal{V}_\lambda(f)(z)$ belongs to $M(\xi)$
provided
\begin{align*}
B<min\left(\left[2+\dfrac{1}{\mu}\right], (C-A-1)\right).
\end{align*}
\end{corollary}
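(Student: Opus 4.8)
The plan is to reduce the assertion to a single positivity condition that has already been verified, for the very same kernel, in the proof of Theorem~\ref{application2}. First I would invoke the Note immediately following the equivalence theorem of Section~\ref{sec-W-pascu-genl-operator}, namely $\mathcal{V}_\lambda(f)(z)=\rho z+(1-\rho)V_\lambda(f)(z)$, together with that theorem itself, which asserts that $\mathcal{V}_\lambda(W_{\beta}(\alpha,\gamma))\subset M(\xi)$ if and only if $N_{\Pi_{\mu,\,\nu}}(h)\geq 0$, where $N_{\Pi_{\mu,\,\nu}}(h)$ is the functional of \eqref{eqn-W-pascu-N-Pi}. Since the normalization \eqref{W-beta-rho-eq} specialized to the present choice $\lambda(t)=kt^{B-1}(1-t)^{C-A-B}\phi(1-t)$ is exactly the displayed equation for $\beta$, the hypotheses of that theorem are met and the whole problem is transferred to proving $N_{\Pi_{\mu,\,\nu}}(h)\geq 0$.

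Next I would observe that $N_{\Pi_{\mu,\,\nu}}(h)$ depends on $\lambda$ only through $\Pi_{\mu,\,\nu}$ (which is built from $\Lambda_\nu$), and that the kernel here is identical to the one treated in Theorem~\ref{application2}. Consequently the integration-by-parts simplification performed in the proof of Theorem~\ref{A.S-result-inc} applies word for word, reducing $N_{\Pi_{\mu,\,\nu}}(h)\geq 0$ to the sufficient condition \eqref{final-cond-W-class}. Moreover $C-A-B>1$ forces $\lambda(1)=\lambda'(1)=0$, so the auxiliary inequality \eqref{additional-cond-W-class} is automatically satisfied and may be discarded, exactly as observed after \eqref{additional-cond-W-class}.

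The remaining computation is then literally that of Theorem~\ref{application2}: substituting $\lambda$, $\lambda'$ and $\lambda''$ into \eqref{final-cond-W-class} and writing the outcome as $(1-\xi)h_1(t)+\xi h_2(t)$, it suffices to show $X_1(t)>0$, $X_2(t)>0$ and $X_3(t)>0$ on $(0,1)$, using $\phi(1-t)>0$, $\phi'(1-t)>0$ and $\phi''(1-t)>0$. The constraint $B<\min\{2+1/\mu,\,C-A-1\}$ is precisely what renders each $X_i$ positive, so no genuinely new estimate is required.

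The only new point, and the one I would treat with care, is confirming that passing from $V_\lambda$ to the $\rho$-deformed operator $\mathcal{V}_\lambda$ leaves the functional $N_{\Pi_{\mu,\,\nu}}$ unchanged. This is exactly the content of the equivalence theorem of Section~\ref{sec-W-pascu-genl-operator}, whose proof parallels that of Theorem~\ref{thm-W-Pascu-dual-equiv}: the parameter $\rho$ enters only through the linear term $\rho z$ and the normalization \eqref{W-beta-rho-eq}, whereas $\Pi_{\mu,\,\nu}$, and therefore $N_{\Pi_{\mu,\,\nu}}(h)$, are assembled solely from $\lambda$. Once this invariance is granted, the corollary follows at once from the positivity analysis above; the strengthened hypotheses $\mu>1$ and $\nu>1$ serve only to keep $\Pi_{\mu,\,\nu}$ and $\Lambda_\nu$ integrable on $[0,1]$ and positive on $(0,1)$, as the sufficient-condition machinery demands.
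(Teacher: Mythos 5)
Your proposal is correct and follows exactly the route the paper intends: the paper states these corollaries without proof, noting only that they follow from the equivalence theorem for $\mathcal{V}_\lambda$ (via $N_{\Pi_{\mu,\,\nu}}(h)\geq 0$, which depends on $\lambda$ alone and not on $\rho$) combined with the positivity analysis already carried out for this kernel in Theorem~\ref{application2}. Your reduction through \eqref{final-cond-W-class} and the functions $X_1$, $X_2$, $X_3$ is precisely that argument.
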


\begin{corollary}
Let $0\leq \xi\leq 1$, $a>-1$, and $b>-1$.
Then for $\rho<1$ and $\beta<1$ satisfying
\begin{align*}
\dfrac{1}{2(1-\beta)(1-\rho)}=\int_0^1 \lambda(t)\left[(1-\xi)\left(\dfrac{1-g(t)}{2}\right)+\xi(1-q(t))\right] dt,
\end{align*}
where $g(t)$ and $q(t)$ are defined by \eqref{series-g(t)} and \eqref{series-q(t)}
 respectively and $\lambda(t)$ is given by
\begin{align*}
\lambda(t)= \begin{cases}
(a+1)(b+1)\frac{t^a(1-t^{b-a})}{b-a},
 &b\neq a,\\
(a+1)^2t^a\log(1/t),  &  b=a.
\end{cases}
\end{align*}
Assume that for $f\in W_{\beta}(\alpha,\gamma)$,
the function $\mathcal{V}_\lambda(f)(z)$ belongs to $M(\xi)$
provided
\begin{enumerate}
\item[{\rm{(i)}}] $b>a$, \quad $-1<a<0$ and \quad $b+a-1 <\dfrac{1}{\mu} < b-1$,
\item[{\rm{(ii)}}] $b<a$, \quad $-1<b<0$ and \quad $b+a-1 <\dfrac{1}{\mu} < a-1$,
\item[{\rm{(iii)}}] $b=a<0$  \quad and \quad $\dfrac{1}{\mu} > b-1$, which for $\mu\geq1$
{\rm{(as in Theorem \ref{A.S-result-inc})}} gives $b<2 \Longrightarrow b<\min\{0,2\}=0$.
\end{enumerate}
\end{corollary}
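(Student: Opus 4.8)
The plan is to reduce the admissibility of the generalized operator $\mathcal{V}_\lambda$ to the very same inequality $N_{\Pi_{\mu,\,\nu}}(h)\geq 0$ that governs $V_\lambda$, and then to invoke verbatim the sign analysis already carried out for the corresponding $V_\lambda$-result with the identical kernel $\lambda(t)$. First I would appeal to the equivalence established for $\mathcal{V}_\lambda$, namely that under the normalization \eqref{W-beta-rho-eq} one has $F=\mathcal{V}_\lambda(W_\beta(\alpha,\gamma))\subset M(\xi)$ if and only if $N_{\Pi_{\mu,\,\nu}}(h)\geq 0$. The crucial observation is that the functional $N_{\Pi_{\mu,\,\nu}}(h)$ defined in \eqref{eqn-W-pascu-N-Pi} depends only on $\Pi_{\mu,\,\nu}$, hence through \eqref{eqn-Pi-nu-mu} and \eqref{eqn-lambda-nu} only on $\lambda(t)$; the parameter $\rho$ does not appear in it at all, entering the statement solely through the $\beta$-normalization. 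Consequently the admissibility question for $\mathcal{V}_\lambda$ collapses to exactly the inequality already treated for $V_\lambda$.

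Next I would apply the reduction of Section \ref{sec-W-pascu-application}. As shown there (via Theorem \ref{A.S-result-inc} and the manipulation of \eqref{A.S-res-inc-main-eq}), the condition $N_{\Pi_{\mu,\,\nu}}(h)\geq 0$ is guaranteed once the sufficient inequality \eqref{final-cond-W-class} holds, together with the auxiliary inequality \eqref{additional-cond-W-class} whenever $\lambda(1)$ or $\lambda'(1)$ is nonzero. For the present kernel a direct check gives $\lambda(1)=\lambda'(1)=0$ in the case $b\neq a$, so by the remark following \eqref{additional-cond-W-class} only \eqref{final-cond-W-class} (equivalently \eqref{initial-cond-W-class}) need be verified.

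Finally, since $\lambda(t)$ here coincides with the kernel of the corresponding $V_\lambda$-theorem, the computation of $\lambda'(t)$ and $\lambda''(t)$ and the subsequent sign analysis of the resulting expressions $A(t)$ and $B(t)$ in the cases $b>a$ and $b<a$, and of the two logarithmic expressions in the case $b=a<0$, reproduces that proof line for line and yields precisely the three parameter ranges (i)--(iii). The only genuinely new point, and the main thing to get right, is the first paragraph's claim that $N_{\Pi_{\mu,\,\nu}}(h)$ is $\rho$-independent, so that the entire verification transfers legitimately from $V_\lambda$ to $\mathcal{V}_\lambda$; once this is granted, no calculation beyond the earlier one is required, which is exactly why the author states the result without a separate proof.
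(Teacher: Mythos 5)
Your proposal is correct and is essentially the paper's own (unwritten) argument: the paper states this corollary without a separate proof precisely because the equivalence $F=\mathcal{V}_\lambda(W_{\beta}(\alpha,\gamma))\subset M(\xi)\Longleftrightarrow N_{\Pi_{\mu,\,\nu}}(h)\geq 0$ reduces the question to the $\rho$-independent functional $N_{\Pi_{\mu,\,\nu}}$ ($\rho$ enters only through the $\beta$-normalization), so the sign analysis of $A(t)$, $B(t)$ and the two logarithmic expressions from the corresponding theorem of Section \ref{sec-W-pascu-application} transfers verbatim. One small inaccuracy: for $b\neq a$ one has $\lambda'(1)=-(a+1)(b+1)\neq 0$, so your stated reason for bypassing \eqref{additional-cond-W-class} is not quite right; the paper's own Section \ref{sec-W-pascu-application} argument instead uses $\lambda(1)=0$, $\Lambda_\nu(1)=0$ and $\lambda'(t)\leq 0$ to reduce to \eqref{final-cond-W-class}, which suffices and leaves your conclusion unchanged.
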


\begin{corollary}
Let $c<0$, $\mu \geq 1$ satisfies \eqref{W-class-beta-cond},
with $0\leq \xi \leq 1$
Then for $p>2$, $\rho<1$ and $\beta<1$ satisfying
\begin{align*}
\dfrac{1}{2(1-\beta)(1-\rho)}=\dfrac{(1+c)^p}{\Gamma(p)}
\int_0^1 t^{c}\left(\log\frac{1}{t}\right)^{p-1}\left[(1-\xi)\left(\dfrac{1-g(t)}{2}\right)+\xi(1-q(t))\right] dt
\end{align*}
where $g(t)$ and $q(t)$ are defined by \eqref{series-g(t)} and \eqref{series-q(t)}
respectively. Then for $f\in W_{\beta}(\alpha,\gamma)$,
the function $\mathcal{V}_\lambda(f)(z)$ belongs to $M(\xi)$.
\end{corollary}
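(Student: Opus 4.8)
The plan is to deduce this corollary from the equivalence established for the generalized operator in Section \ref{sec-W-pascu-genl-operator} together with the Komatu computation already carried out in Theorem \ref{komatu-pascu-W}. First I would invoke that equivalence, which reduces the assertion $\mathcal{V}_\lambda(W_\beta(\alpha,\gamma)) \subset M(\xi)$ to verifying the single functional inequality $N_{\Pi_{\mu,\nu}}(h) \geq 0$, with $N_{\Pi_{\mu,\nu}}$ as in \eqref{eqn-W-pascu-N-Pi}. The structural point, recorded in the note $\mathcal{V}_\lambda(f)(z) = \rho z + (1-\rho)V_\lambda(f)(z)$, is that the generalized operator differs from $V_\lambda$ only by the linear summand $\rho z$ and the scaling factor $(1-\rho)$. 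Consequently the functional $N_{\Pi_{\mu,\nu}}(h)$ is literally unchanged, since it is assembled solely from $\Pi_{\mu,\nu}$ and $\Lambda_\nu$, which depend only on $\lambda(t)$ and not on $\rho$; the entire effect of the translation by $\rho z$ is absorbed into the normalization of $\beta$, so that \eqref{W-beta-rho-eq} is precisely the $(1-\rho)$-rescaled analogue of \eqref{W-class-beta-cond}.

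Next I would identify the weight. Here $\lambda(t) = \frac{(1+c)^p}{\Gamma(p)}\, t^c (\log(1/t))^{p-1}$ is exactly the Komatu weight of Theorem \ref{komatu-pascu-W}, so the sufficiency of $N_{\Pi_{\mu,\nu}}(h) \geq 0$ can be checked verbatim as there. Writing $\phi(1-t) = (\log(1/t)/(1-t))^{p-1}$ and taking $C - A - B = p-1$, $B = c+1$, the weight assumes the form $\lambda(t) = K t^{B-1}(1-t)^{C-A-B}\phi(1-t)$ with $K = (1+c)^p/\Gamma(p)$, which places it under the hypotheses of Theorem \ref{application2}. Since $c < 0$ forces $B = c+1 < 1$ and $p > 2$ forces $C-A-B = p-1 > 1$, one gets $\lambda(1) = \lambda'(1) = 0$; hence, by the remark following \eqref{additional-cond-W-class}, only inequality \eqref{final-cond-W-class} has to be examined, and this is handled through the positivity of the auxiliary expressions $X_1, X_2, X_3$ together with the positivity of $\phi$, $\phi'$, $\phi''$ for the logarithmic kernel.

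The parameter reduction of Theorem \ref{application2} then collapses to the constraint $c < \min\{0,\, 1 + 1/\mu\} = 0$, which is guaranteed by the standing hypotheses $c < 0$ and $\mu \geq 1$. Thus \eqref{final-cond-W-class} holds, giving $N_{\Pi_{\mu,\nu}}(h) \geq 0$, and the equivalence of Section \ref{sec-W-pascu-genl-operator} yields $\mathcal{V}_\lambda(f) \in M(\xi)$. I expect no genuine obstacle, as every ingredient is already available; the only step requiring care is confirming that the passage from $V_\lambda$ to $\mathcal{V}_\lambda$ leaves $N_{\Pi_{\mu,\nu}}$ untouched and alters only the $\beta$-normalization — that is, verifying that \eqref{W-beta-rho-eq} is the correct $\rho$-twisted form of \eqref{W-class-beta-cond}, obtained by tracking the linear term $\rho z$ through the normalization computation.
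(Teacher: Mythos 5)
Your proposal is correct and follows exactly the route the paper intends: the paper states this corollary without proof, remarking that it follows from the equivalence for $\mathcal{V}_\lambda$ in Section \ref{sec-W-pascu-genl-operator} (using that $\mathcal{V}_\lambda(f)(z)=\rho z+(1-\rho)V_\lambda(f)(z)$ leaves $N_{\Pi_{\mu,\nu}}$ unchanged) combined with the Komatu computation of Theorem \ref{komatu-pascu-W}, i.e.\ the reduction to Theorem \ref{application2} with $\phi(1-t)=(\log(1/t)/(1-t))^{p-1}$, $B=c+1$, $C-A-B=p-1$ and the resulting condition $c<\min\{0,1+1/\mu\}=0$. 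You have simply filled in the details the paper omits, and your parameter checks (including $p>2$ giving $B<C-A-1$) are accurate.
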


\begin{remark}
All the above applications at $\rho=0$ reduces to the results obtained in Section $\ref{sec-W-pascu-application}$.
Further Corollary $\ref{cor-W-pascu-genl-operator-bernardi}$ at $\xi=1$  and $\xi=0$ reduces respectively
to corollaries $6.4$  and $6.5$ given in \cite{Mahnaz C}. All the other corollaries in this section for $\rho\neq 0$
and $0<\xi<1$ are not discussed in the literature elsewhere.
\end{remark}

\end{document}